\newtheorem{theorem}{Theorem}
\newtheorem{lemma}[theorem]{Lemma}
\newtheorem{proposition}[theorem]{Proposition}
\theoremstyle{definition}
\newtheorem{definition}[theorem]{Definition}
\theoremstyle{remark}
\newtheorem*{remark*}{Remark}
\newcommand{\D}{{\rm d}}
\newcommand{\FBN}{\mathbb N}
\newcommand{\FBR}{\mathbb R}
\newcommand{\FBC}{\mathbb C}
\newcommand{\FBcD}{{\mathcal D}}
\newcommand{\FBcE}{{\mathcal E}}
\newcommand{\FBcF}{{\mathcal F}}
\newcommand{\FBcO}{{\mathcal O}}
\newcommand{\FBcR}{{\mathcal R}}
\newcommand{\FBcS}{{\mathcal S}}
\newcommand{\FBcW}{{\mathcal W}}
\newcommand{\email}[1]{\protect\href{mailto:#1}{#1}}
\begin{document}
\selectlanguage{english}

\title{The Shearlet Transform and Lizorkin Spaces}
\author{Francesca Bartolucci \thanks{Department of Mathematics, ETH Zurich, Raemistrasse 101, 8092 Zurich, Switzerland (\email{francesca.bartolucci@sam.math.ethz.ch}).}
\and  Stevan Pilipovi\'c \thanks{Department of Mathematics and Informatics, Faculty of Sciences, University of Novi Sad, Trg Dositeja Obradovi\' ca 4, 21000 Novi Sad, Serbia (\email{stevan.pilipovic@gmail.com}, \email{nenad.teofanov@dmi.uns.ac.rs}).} 
\and Nenad Teofanov \footnotemark[2]}
%
%
\maketitle

\abstract{We prove a continuity result for the shearlet transform when restricted to the space of smooth and rapidly decreasing functions
with all vanishing moments.  We define the dual shearlet transform, called here the shearlet synthesis operator, and we prove its continuity on the space of smooth and rapidly decreasing functions over $\FBR^2\times\FBR\times\FBR^{\times}$. Then, we use these continuity results to extend the shearlet transform to the space of Lizorkin distributions, and we prove its consistency with the classical definition for test functions.
}\\\\

\noindent\textit{Key words.} shearlet transform; wavelet transform; Radon transform; Ridgelet transform; Lizorkin spaces
\vspace{2mm}

\section{Introduction}
\label{FBsec:introduction}
Among the large reservoir of directional multiscale representations which have been introduced over the years, the shearlet representation has gained considerable attention for its capability to resolve the
wavefront set of distributions, providing both the location and the
geometry of the singularity set of signals. Indeed, when we shift from one-dimensional to multidimensional signals, it is not just of interest
to locate singularities in space but also to describe how they are distributed. This additional information is expressed by the notion of wavefront set introduced by H\"ormander in \cite{FBhormander83}.
In \cite{FBkula09} the authors
show that the decay rate of the shearlet
coefficients $\FBcS_{\psi}f(b,s,a)$ of a signal $f$ with respect to suitable
shearlets $\psi$ characterizes the wavefront set of $f$. Precisely, they show that for any signal $f\in L^2(\FBR^2)$ the shearlet coefficients $\FBcS_{\psi}f(b,s,a)$ exhibit fast asymptotic decay as $a\to0$ except when the pair $(b,(\xi_1,\xi_2))\in\FBR^2\times\FBR^2$, with $\xi_2/\xi_1=s$, belongs to the wavefront set of $f$. Later this result has
been generalized in \cite{FBgr11} where it is shown that
the same result holds true under much weaker assumptions on the
admissible vectors by means of a new approach based on an adaptation of the Radon transform to the shearlet structure, the affine Radon transform.

\par

On the other hand, while the classical wavelet transform is widely
exploited in signal analysis for describing pointwise smoothness
of univariate functions (we refer to \cite{FBjaffard89,FBmallat09} as classical references), it has proved not flexible enough to capture the geometry of the singularity set when we shift from one-dimensional signals to multidimensional signals.
We refer to e.g. \cite{FBpilvul} for a modification of the wavelet transform which overcomes these difficulties.

\par
In some sense, shearlets behave for high-dimensional signals as wavelets do for one-dimensional signals and the link between these two transforms has been clarified in \cite{FBbardemadeviodo} where it is shown that the shearlet transform is the composition of the affine Radon transform with a one-dimensional wavelet transform, followed by a convolution operator with a scale-dependent filter.
\par

There are at least two classical approaches to extend integral transforms to generalized function spaces. The coorbit space theory introduced by Feichtinger and Gr\"ochenig in \cite{feichtingergrochenig89,feichtgroche89} applies when the integral transform is the voice transform associated to a square-integrable representation of a locally compact group, and this is the case of the shearlet transform. We refer to \cite{dahlikeetal} for an extension of the shearlet transform based
on the coorbit space theory. The second way to proceed is the duality approach introduced by Schwartz in the 50's. A classical example is the extension of the Fourier transform to the space of tempered distributions. In this paper we extend the shearlet transform to distributions following the approach of Schwartz.

\par

Our work arises from the lack of a complete distributional framework for the shearlet transform in the literature and from the link between the shearlet transform with the Radon and the wavelet transforms, whose distribution theory is deeply investigated and well known subject in applied mathematics.  We refer respectively to \cite{FBhol1995, FBPRTV} and to \cite{FBhelgason99,FBhertle83} for the extension of the wavelet transform and the Radon transform to various generalized function spaces via a duality approach. The Lizorkin space plays a crucial role in the development of a distributional framework for these two classical transforms and it turns out to be a natural domain for the shearlet transform too. We recall that the Lizorkin space $\mathcal{S}_0 (\mathbb{R}^{d})$ consists of smooth and rapidly decreasing functions with vanishing moments of any order. Moreover, in \cite{FBkpsv2014} the authors show that the domain of the ridgelet transform can be enlarged to its dual space $\FBcS'_0(\FBR^2)$, known as the space of Lizorkin distributions. Their proofs widely exploit the intimate connection between the Radon, the ridgelet and the wavelet transforms, which also yields a relation formula between the shearlet transform and the ridgelet transform as we show in the Appendix in Proposition~\ref{FBprop:rideletwaveletradon}. This has in part inspired our work and we adapt several ideas of \cite{FBkpsv2014} to our context.
\par
Our main results are continuity theorems for the shearlet transform and its dual transform, called the shearlet synthesis operator, on various test function spaces. Precisely, we prove that the shearlet transform $\FBcS_\psi\colon \mathcal{S}_0(\FBR^2)\to \mathcal{S} (\mathbb{S}) $ and its dual transform $\FBcS_\psi^t\colon \mathcal{S}(\mathbb{S})\to \mathcal{S} (\FBR^2)$ are continuous, where $\mathcal{S}(\mathbb{S})$ is a certain space of highly localized functions (see Subsection \ref{FBsec:2} for the definition of $ \mathcal{S} (\mathbb{S}) $). Our continuity theorems hold for suitable choice of the admissible vector $\psi$ in the space $\mathcal{S}_0(\FBR^2)$ (see Section~3) and this is not a surprising condition. Indeed, as pointed out in wavelet analysis \cite{FBmallat09}, shearlet analysis \cite{FBgr11} and in the study of the Taylorlet transform \cite{FBfinkkahler19}, vanishing moments are crucial in order to measure the local regularity and to detect anisotropic structures of a signal. Then, we use these continuity results to extend the shearlet transform to the space of Lizorkin distributions following the approach in \cite{FBkpsv2014}. We show that the shearlet transform can be extended as a continuous map from $\FBcS'_0(\FBR^2)$ into $\FBcS'(\mathbb{S})$, where $\FBcS'(\mathbb{S})$ is the space of distributions of slow growth on $\FBR^2\times\FBR\times\FBR^{\times}$.  Observe that many important Schwartz distribution spaces, such as $\FBcE ' (\mathbb{R}^{d})$,
$\FBcO _C' (\mathbb{R}^{d})$, $ L^p  (\mathbb{R}^{d})$ and $\FBcD _{L^1} ' (\mathbb{R}^{d})$ are embedded into the
the space of Lizorkin distibutions  $\mathcal{S}_0 ' (\mathbb{R}^{d})$ (see e.g. \cite{FBkpsv2014}).
\par
When considering possible applications of our approach,
we notice that the rectified linear units (ReLUs), which are important examples of
unbounded activation functions in the context of deep learning neural networks, belong to the space of Lizorkin distributions, see
\cite{FBsomu17} for details.
\par
The chapter is organized as follows. In Section~2 we introduce the spaces that occur in our analysis. Then, we recall the definition and the basic properties of the wavelet transform and the Radon transform in polar and affine coordinates. Section~3
is devoted to an introduction of the shearlet transform and to recall one of the main results in \cite{FBbardemadeviodo}. Moreover, in Theorem~\ref{FBthm:continuityshearlettransform}, we give a sketch of the proof of the continuity of the shearlet transform $\FBcS_\psi$ on $\mathcal{S}_0(\FBR^2)$. In Section~4 we introduce and study the shearlet synthesis operator.  In particular, in Theorem \ref{FBthm:continuitysynthesisoperator} we prove its continuity on $ \mathcal{S} (\mathbb{S}) $.  The importance of this dual transform follows by the fact that it can be used to define the extension of the shearlet transform to the space of Lizorkin distibutions  $\mathcal{S}_0 '(\mathbb{R}^{2})$ in a natural way, as we show in Section \ref{FBsec:5}. We conclude our analysis with Theorem 4 which proves that our definition of the shearlet transform of distributions extends the ones considered so far, see e.g. \cite{FBkula12, FBgr11}, and it is consistent with those for test functions (see Definition \ref{FBdefsheartransf}). Moreover, Theorem 4 shows that our duality approach is equivalent to the one based on the coorbit space theory presented in \cite{dahlikeetal}.

\subsection{Notation}
\label{FBsubsec:1.1}
We briefly introduce the notation. We set $\mathbb{N} = \{ 0,1,2, \dots \}$, $ \mathbb{Z}_+$ denotes the set of positive integers,
$\FBR_+ =(0,+\infty)$, $\FBR^{\times}=\FBR\setminus\{0\}$ and $\mathbb{H}^{d + 1} = \mathbb{R}^d \times \mathbb{R}^{\times} $, $d \in \mathbb{Z}_+$. We also use the notation $ \mathbb{H}^{(m_1,\ldots,m_d,1)} $
$=\FBR^{m_1}\times\ldots\times\FBR^{m_d}\times\FBR^{\times}$, $ m_j \in \mathbb{Z}_+, $ $ j =1,\dots,d.$
When $ x, y \in \mathbb{R}^d $ and $ m \in
\mathbb{N}^d $, $|x|$ denotes the Euclidean norm, $x\cdot y$ their scalar product, $ \langle x \rangle = (1+|x|^2)^{1/2}$,
$ xy = x_1 y_1 + x_2 y_2 + \dots + x_d y_d, $ $ x^{m} = x_1^{m_1} \dots x_d^{m_d}$
 and
$\partial^{m}=\partial_x^{m} =
\partial_{x_1}^{m_1} \dots\partial_{x_d}^{m_d}$. We write also $\varphi^{(m)}=\partial^{m}\varphi$, $m\in\FBN^d$.
By a slight abuse of notation, the length of a multi-index
$ m \in \mathbb{N}^d $ is denoted by $ |m| = m_1 + \dots + m_d $ and
the meaning of $|\cdot| $ shall be clear from the context.
We write $ A\lesssim B $ when $ A \leq C \cdot B $ for
some positive constant $C$.
\par
For any $p\in[1,+\infty]$ we denote
by $L^p(\FBR^d)$ the Banach space of functions $f\colon\FBR^d\rightarrow\FBC$ that are $p$-integrable with respect to the Lebesgue measure $\D x$
and, if $p=2$, the corresponding scalar product and norm are
$\langle\cdot,\cdot\rangle$ and $\|\cdot\|$, respectively.
The Fourier transform is denoted by $\mathcal F$ both on
$L^2(\FBR^d)$ and on  $L^1(\FBR^d)$, where it is
defined by
\begin{equation*}
\mathcal F f({\xi}\,)= \int_{\FBR^d} f(x) {\rm e}^{-2\pi i\,
  {\xi}\cdot x } \D{x},\qquad f\in L^1(\FBR^d).
\end{equation*}
If $G$ is a locally compact group, we denote by $L^2(G)$ the Hilbert
space of square-integrable functions with respect to a left Haar
measure on $G$, and $C(G)$ denotes the space of continuous functions on $G$. If $A\in M_{d}(\FBR)$, the vector space of square $d\times d$ matrices with real entries, $^t\! A$ denotes its transpose and we denote the (real) general linear group of size $d\times d$ by ${\rm GL}(d,\FBR)$.
Finally, for every $b\in\FBR^d$, the translation operator acts on a
function $f:\FBR^d\to \FBC$ as
$
T_bf(x)=f(x-b)
$
and the dilation operator $D_a\colon L^p(\FBR^d)\to L^p(\FBR^d)$ is defined by $D_a f(x)=|a|^{-\frac{1}{2}}f(x/a)$ for every $a\in\FBR^{\times}$.
\par
The dual pairing between a test function space $ {\mathcal A}$ and its dual space of distributions
${\mathcal A'}$ is denoted by $(\: \cdot\:, \:\cdot\:)={_{\mathcal A'}(\: \cdot\:, \:\cdot \:)_{\mathcal A} }$
and we provide all distribution spaces with the strong dual topologies.
\par
The Schwartz space of rapidly decreasing smooth test functions is denoted by $\mathcal{S}(\mathbb{R}^{d})$
and $\mathcal{S}'(\mathbb{R}^{d})$ denotes its dual space of tempered distributions.  For the seminorms on $ \mathcal{S} (\FBR^d)$,
we make the choice
\begin{equation*}
\rho_\nu (\varphi ) = \sup_{x \in \FBR^d, |m|\leq \nu} \langle x \rangle ^\nu |\partial^{m} \varphi(x) |,
\end{equation*}  for every $ \nu \in \FBN$ and
$\varphi \in  \mathcal{S} (\FBR^d)$.
\section{Preliminaries}
In this section we first introduce the Lizorkin space of test functions, an important subspace of $\mathcal{S}(\mathbb{R}^{d})$ which plays a crucial role in our analysis. Afterwards, we recall the definition and the main properties of the wavelet transform and the Radon transform in polar and affine coordinates in order to recall in Section~\ref{FBsec:3} part of the results contained in \cite{FBbardemadeviodo}.
\subsection{The spaces}\label{FBsec:2}
In this subsection we introduce the spaces that occur in this chapter and we state some auxiliary results which we widely exploit in the proofs of our main results (see Lemma~\ref{FBLm:stability} and \ref{FBLm:mainlm} below).
In particular, the Lizorkin space $\mathcal{S}_0 (\mathbb{R}^{d})$ will play a crucial role in our analysis. It consists of rapidly decreasing functions with vanishing moments of any order. Precisely,
\begin{equation*}\mathcal{S}_0 (\mathbb{R}^{d}) = \left\{\varphi\in
\mathcal{S}(\mathbb{R}^{d}): \: \mu_m (\varphi) = 0,\ \forall m
\in \mathbb{N}^{d} \right\},
\end{equation*}
where $\mu_{m}(\varphi)=\int_{\mathbb{R}^{d}} x^{m}\varphi(x)dx$,
$m\in\mathbb{N}^{d}$.
The Lizorkin space  $\mathcal{S}_0 (\mathbb{R}^{d})$ is a closed subspace of
$\mathcal{S}(\mathbb{R}^{d})$ equipped with the relative
topology inhered from $\mathcal{S}(\mathbb{R}^{d})$ and
its dual space of Lizorkin distributions $\FBcS'_0(\FBR^d)$ is canonically isomorphic to the quotient of $\FBcS'(\FBR^d)$ by the space of polynomials
(cf. \cite{FBhol1995, FBkpsv2014}).
\par
We are also interested in the Fourier Lizorkin space $\hat{\mathcal{S}}_0 (\mathbb{R}^{d})$ which consists of rapidly decreasing functions that vanish in zero together with all their partial derivatives, i.e.
\begin{equation*}
\hat{\mathcal{S}}_0 (\mathbb{R}^{d})
= \left\{\varphi\in \mathcal{S}(\mathbb{R}^{d}): \: \partial ^{m} \varphi (0)= 0,\ \forall m \in \mathbb{N}^{d} \right\},
\end{equation*}
which is a closed subspace of $\mathcal{S}(\mathbb{R}^{d})$ too and we endow it with the relative
topology inhered from $\mathcal{S}(\mathbb{R}^{d})$.
\par
We observe that, since $\mathcal{S}_0 (\mathbb{R}^{d})$ and  $\hat{\mathcal{S}}_0 (\mathbb{R}^{d})$ are
closed subspaces of the nuclear space $ \mathcal{S} (\mathbb{R}^d) $, they are nuclear as well.
We denote by $X\hat{\otimes}Y$ the topological tensor product space obtained as the completion of $X\otimes Y$ in the inductive tensor product topology $\varepsilon$ or the projective tensor product topology $\pi$, see \cite{FBTreves1967} for details. Then, we have the following result.
\begin{lemma} \label{FBLm:stability}
The spaces $\mathcal{S}_0 (\mathbb{R}^{d})$ and $\hat{\mathcal{S}}_0 (\mathbb{R}^{d})$ are
closed under translations, dilations, differentiations and multiplications by  a polynomial.
Moreover, the Fourier transform is an isomorphism between $\mathcal{S}_0 (\mathbb{R}^{d})$ and
$\hat{\mathcal{S}}_0 (\mathbb{R}^{d})$ and we have the following canonical isomorphisms:
\begin{equation*}
\mathcal{S}_{0} (\mathbb{R}^{d}) \cong
\mathcal{S}_{0} (\mathbb{R}^{d_1}) \hat{\otimes} \mathcal{S}_{0} (\mathbb{R}^{d_2}),
\end{equation*}
\begin{equation*}
\hat{\mathcal{S}}_{0} (\mathbb{R}^{d}) \cong
\hat{\mathcal{S}}_{0} (\mathbb{R}^{d_1}) \hat{\otimes} \hat{\mathcal{S}}_{0} (\mathbb{R}^{d_2}),
\end{equation*}
where $d =d_1 + d_2 \in \mathbb{Z}_+$, and $\hat{\otimes} $ denotes the completion with respect to the $\varepsilon-$topology  or the $\pi$-topology.
\end{lemma}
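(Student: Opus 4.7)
The plan is to split the lemma into three parts: (i) the closure properties for $\mathcal{S}_0(\FBR^d)$; (ii) the Fourier isomorphism together with the derived closure properties for $\hat{\mathcal{S}}_0(\FBR^d)$; and (iii) the tensor product identifications. Parts (i) and (ii) are short bookkeeping steps; the third part is the main obstacle and is where nuclearity is used essentially.

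For part (i), I would verify each closure property directly from the definition $\mu_m(\varphi)=\int_{\FBR^d} x^{m}\varphi(x)\D x$. Translation reduces to a binomial expansion, so that $\mu_m(T_b\varphi)=\sum_{k\le m}\binom{m}{k} b^{m-k}\mu_k(\varphi)$; dilation is a change of variables and yields $\mu_m(D_a\varphi)=c_{a,m}\,\mu_m(\varphi)$ for some scalar $c_{a,m}$; integration by parts gives $\mu_m(\partial^k\varphi)=(-1)^{|k|}\tfrac{m!}{(m-k)!}\,\mu_{m-k}(\varphi)$ when $k\le m$ and $0$ otherwise; multiplication by $x^k$ satisfies $\mu_m(x^k\varphi)=\mu_{m+k}(\varphi)$. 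In each case the right-hand side vanishes whenever $\varphi\in\mathcal{S}_0(\FBR^d)$, so the closure claims for $\mathcal{S}_0$ follow at once.

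For part (ii), the bridge between the two spaces is the elementary identity $\partial^m\mathcal{F}\varphi(0)=(-2\pi i)^{|m|}\mu_m(\varphi)$ together with its inverse counterpart $\mu_m(\mathcal{F}^{-1}\psi)=(2\pi i)^{-|m|}\partial^m\psi(0)$. Since $\mathcal{F}$ is already a topological automorphism of $\mathcal{S}(\FBR^d)$, these identities show that $\mathcal{F}$ restricts to a linear homeomorphism between $\mathcal{S}_0(\FBR^d)$ and $\hat{\mathcal{S}}_0(\FBR^d)$. The closure statements for $\hat{\mathcal{S}}_0(\FBR^d)$ are then obtained by direct inspection of the condition $\partial^{m}\varphi(0)=0$ (trivial for differentiation and polynomial multiplication), while for translations and dilations one uses that they correspond, via $\mathcal{F}$, to modulations and dilations on the other side.

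For part (iii), the main step, the idea is to use nuclearity to reduce the tensor product claim to a statement about intersections of kernels of continuous functionals. The standard factorization $\mathcal{S}(\FBR^d)\cong\mathcal{S}(\FBR^{d_1})\hat{\otimes}\mathcal{S}(\FBR^{d_2})$ is available, with the $\varepsilon$- and $\pi$-topologies coinciding thanks to nuclearity. By definition, $\hat{\mathcal{S}}_0(\FBR^d)$ is the intersection of the kernels of the continuous functionals $\varphi\mapsto\partial^{m}\varphi(0)$, and under the tensor factorization each such functional decomposes as the tensor product of the corresponding evaluation functionals on the two factors. The general fact that, for closed subspaces of a nuclear space cut out by such factorizable families of continuous functionals, the $\hat{\otimes}$-product coincides with the corresponding intersection of kernels in the ambient tensor product then yields the factorization for $\hat{\mathcal{S}}_0$; the factorization for $\mathcal{S}_0$ follows by applying part (ii) in each tensor factor. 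The delicate point is precisely that the tensor product of two closed subspaces remains closed in $\mathcal{S}(\FBR^d)$ and agrees with the intersection-of-kernels subspace, which is where the nuclearity hypothesis is indispensable.
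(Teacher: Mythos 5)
The paper gives no actual proof of this lemma (it is dismissed as ``classical arguments'' with a pointer to Tr\`eves' kernel theorem for the tensor identities), so there is nothing to compare step by step; your parts (i) and (ii) are the standard computations and are essentially what ``classical arguments'' means here. One small problem in part (ii): your treatment of translations on $\hat{\mathcal{S}}_0(\FBR^d)$ does not work. Under $\FBcF$, translation corresponds to modulation, but modulation does not preserve $\mathcal{S}_0(\FBR^d)$ (the zeroth moment of $e^{2\pi i b\cdot x}\varphi(x)$ is $\FBcF\varphi(-b)$, in general nonzero), and correspondingly $\hat{\mathcal{S}}_0(\FBR^d)$ is not translation invariant: if all derivatives of $\varphi$ vanish at $0$, there is no reason for them to vanish at $-b$. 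So this clause has to be read as attaching the appropriate invariances to each of the two spaces, and your appeal to ``modulations on the other side'' proves nothing for $\hat{\mathcal{S}}_0$.

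More seriously, the ``general fact'' you invoke in part (iii) is not a fact. For closed subspaces $E_0\subset E$, $F_0\subset F$ of nuclear Fr\'echet spaces, $E_0\hat{\otimes}F_0$ identifies with the closure of $E_0\otimes F_0$ in $E\hat{\otimes}F$, and this closure is in general \emph{strictly smaller} than the common kernel of the factorizable functionals $u\otimes v$. This happens in the case at hand: take $\varphi_1\in\hat{\mathcal{S}}_0(\FBR)$, $\varphi_1\neq 0$, and $\varphi_2\in\mathcal{S}(\FBR)$ with $\varphi_2(0)\neq 0$. All partial derivatives of $\varphi_1\otimes\varphi_2$ vanish at the origin, so $\varphi_1\otimes\varphi_2\in\hat{\mathcal{S}}_0(\FBR^2)$; yet the continuous functional $\Psi\mapsto\int_{\FBR}\Psi(x,0)\overline{\varphi_1(x)}\,\D x$ vanishes on $\mathcal{S}(\FBR)\otimes\hat{\mathcal{S}}_0(\FBR)$, hence on the closure of $\hat{\mathcal{S}}_0(\FBR)\otimes\hat{\mathcal{S}}_0(\FBR)$, while it equals $\varphi_2(0)\|\varphi_1\|^2\neq 0$ on $\varphi_1\otimes\varphi_2$. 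The same phenomenon occurs for $\mathcal{S}_0$: with $\varphi_1\in\mathcal{S}_0(\FBR)$ and $\int\varphi_2\neq 0$, the functional $\Psi\mapsto\int_{\FBR^2}\Psi(x,y)\overline{\varphi_1(x)}\,\D x\,\D y$ separates $\varphi_1\otimes\varphi_2\in\mathcal{S}_0(\FBR^2)$ from the closure of $\mathcal{S}_0(\FBR)\otimes\mathcal{S}_0(\FBR)$, because the annihilator of $\mathcal{S}_0(\FBR)$ consists of the polynomials and the constant $1$ kills the second factor. So the reduction of part (iii) to an intersection of kernels of factorizable functionals cannot close the argument: the delicate point you yourself flag at the end is precisely where the proof breaks down, and nuclearity does not rescue it. A correct treatment must either produce the isomorphism by a different mechanism than identifying both sides as subspaces of $\mathcal{S}(\FBR^{d})$, or the tensor identities must be restated; as written, part (iii) is a genuine gap.
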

\begin{proof}
The proof is based on classical arguments and we omit it (cf.  \cite[Theorem 51.6]{FBTreves1967} for the canonical isomorphisms).
\end{proof}
\par
The next Lemma is a reformulation of {\cite[Theorem 6.2]{FBmallat09}}.
\begin{lemma}[\cite{FBbartoluccithesis}]
Let $f\in   \mathcal{S}_{0} (\mathbb{R}^d) $. Then, for any given $m\in\FBN^d$ there exists $g\in  \mathcal{S}_{0} (\mathbb{R}^d)$ such that
\begin{equation*}
\FBcF f (\xi) = \xi ^m \FBcF g (\xi),\qquad \xi \in \mathbb{R}^d,
\end{equation*}
and vice versa.\label{FBLm:mainlm}
\end{lemma}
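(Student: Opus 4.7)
My plan is to work entirely on the Fourier side, where Lemma~\ref{FBLm:stability} identifies $\mathcal{S}_0(\mathbb{R}^d)$ with $\hat{\mathcal{S}}_0(\mathbb{R}^d)$. In these terms the claim reads: given $\hat{f}\in\hat{\mathcal{S}}_0(\mathbb{R}^d)$ and $m\in\FBN^d$, there exists $\hat{g}\in\hat{\mathcal{S}}_0(\mathbb{R}^d)$ with $\hat{f}(\xi)=\xi^m\hat{g}(\xi)$, and conversely. The forward implication is the substantive direction, and the tool driving it is the integral form of Taylor's remainder, which is tailor-made for functions that vanish at $0$ together with all their partial derivatives.

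In one dimension I would take as candidate
\[
\hat{g}(\xi) = \int_0^1 \frac{(1-t)^{m-1}}{(m-1)!}\,\hat{f}^{(m)}(t\xi)\,\D t.
\]
Using the integral remainder of Taylor's formula for $\hat{f}$ at $\xi=0$ together with $\hat{f}^{(k)}(0)=0$ for $0\le k\le m-1$, one obtains $\xi^m\hat{g}(\xi)=\hat{f}(\xi)$. Smoothness of $\hat{g}$ is automatic by differentiation under the integral, and at the origin $\hat{g}^{(k)}(0)=\hat{f}^{(m+k)}(0)\,\int_0^1 t^k (1-t)^{m-1}/(m-1)!\,\D t=0$, so $\hat{g}\in\hat{\mathcal{S}}_0(\mathbb{R})$. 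For $d>1$ I would iterate this construction one coordinate at a time, invoking the tensor-product isomorphism $\hat{\mathcal{S}}_0(\mathbb{R}^d)\cong\hat{\mathcal{S}}_0(\mathbb{R})\hat{\otimes}\hat{\mathcal{S}}_0(\mathbb{R}^{d-1})$ from Lemma~\ref{FBLm:stability} to keep the successive remainders inside $\hat{\mathcal{S}}_0$. The converse is then short: if $\hat{f}(\xi)=\xi^m\hat{g}(\xi)$ with $g\in\mathcal{S}_0$, Leibniz's rule yields
\[
\partial^\alpha\hat{f}(0)=\sum_{\beta\le\alpha}\binom{\alpha}{\beta}\bigl(\partial^\beta\xi^m\bigr)\bigl|_0\,\partial^{\alpha-\beta}\hat{g}(0),
\]
and every summand vanishes: either $(\partial^\beta\xi^m)|_0=0$ (when $\beta\ne m$), or $\beta=m$ and then $\partial^{\alpha-m}\hat{g}(0)=0$ since $\hat{g}\in\hat{\mathcal{S}}_0$. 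Hence $\hat{f}\in\hat{\mathcal{S}}_0$, i.e., $f\in\mathcal{S}_0$.

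The main obstacle I expect is verifying that the $\hat{g}$ constructed in the forward direction actually belongs to $\mathcal{S}(\mathbb{R}^d)$, i.e., controlling its Schwartz seminorms $\rho_\nu(\hat{g})$. The Taylor-remainder representation is transparent near the origin but gives little information at infinity; there one has to work from the pointwise equality $\hat{g}(\xi)=\hat{f}(\xi)/\xi^m$ off the coordinate axes, expand $\partial^k(\hat{f}/\xi^m)$ via Leibniz, and estimate each term against the Schwartz seminorms of $\hat{f}$. Packaging these bounds into a continuous dependence of $\rho_\nu(\hat{g})$ on finitely many $\rho_{\nu'}(\hat{f})$ will be the technical heart of the proof.
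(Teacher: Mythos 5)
Your proof is correct, but it runs in the opposite domain from the paper's. The paper stays on the physical side: for $d=1$ and $m=1$ it sets $g(x)=\int_{-\infty}^{x}f(t)\,\D t$, uses the vanishing of the zeroth moment to rewrite this as $-\int_{x}^{+\infty}f(t)\,\D t$ and thereby extract rapid decay of $g$ from that of $f$, gets decay of all derivatives from $g'=f$, obtains the vanishing moments of $g$ by integrating by parts against those of $f$, and then reads off $\FBcF f(\xi)=2\pi i\,\xi\,\FBcF g(\xi)$; the general case is reached by iterating in $m$ and invoking ``analogous computations'' for $d>1$. You instead solve the division problem directly on the Fourier side, with Taylor's integral remainder supplying smoothness and flatness of $\hat g$ at the origin and the pointwise quotient $\hat f/\xi^{m}$ supplying decay at infinity. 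Up to the harmless factor $(2\pi i)^{|m|}$ the two constructions yield the same $g$ (the paper's $g$ is precisely $\FBcF^{-1}[\FBcF f(\xi)/(2\pi i\xi)^{m}]$, the iterated antiderivative). The paper's route buys economy: the only estimate needed is the elementary decay bound on the antiderivative, with no near/far case split. Your route buys two things: the dependence of $g$ on $f$ is visibly continuous in the Schwartz seminorms, and the passage to $d>1$ is made with the right tool, since dividing by $\xi_1^{m_1}$ requires $\hat f$ to be flat on the whole hyperplane $\{\xi_1=0\}$ and not merely at the origin, and your appeal to the tensor-product isomorphism of Lemma~\ref{FBLm:stability} (equivalently, to the directional vanishing moments) is exactly what supplies that flatness, a point the paper's one-line treatment of the higher-dimensional case leaves implicit.
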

\begin{proof}
We start proving the above statement for $d=1$ and $m=1$. Let $f\in   \mathcal{S}_{0} (\mathbb{R}) $ and consider
\begin{equation*}
g(x)=\int_{-\infty}^xf(t)\D t=-\int_{x}^{+\infty}f(t)\D t,
\end{equation*}
where in the second equality we  use the fact that $f\in \mathcal{S}_{0} (\mathbb{R})$. For every $k\in\FBN$ and $x>0$
\begin{align*}
\langle x \rangle^{k} |g(x)|&=|\int_{x}^{+\infty}(1+x^2)^{\frac{k}{2}}f(t)\D t|\leq\int_{x}^{+\infty}(1+t^2)^{\frac{k}{2}}|f(t)|\D t\\
&\leq\rho_{2k+4}(f)\int_{-\infty}^{+\infty}(1+t^2)^{\frac{k}{2}}\frac{1}{(1+t^2)^{k+2}}\D t<+\infty.
\end{align*}
Analogously, for $x<0$ it holds
\begin{align*}
\langle x \rangle^{k} |g(x)|&=|\int_{-\infty}^{x}(1+x^2)^{\frac{k}{2}}f(t)\D t|\leq\int_{-\infty}^{x}(1+t^2)^{\frac{k}{2}}|f(t)|\D t\\
&\leq\rho_{2k+4}(f)\int_{-\infty}^{+\infty}(1+t^2)^{\frac{k}{2}}\frac{1}{(1+t^2)^{k+2}}\D t<+\infty.
\end{align*}
Thus, $g$ is a well defined function and $\sup_{x\in\FBR}\langle x \rangle^{k} |g(x)|<+\infty$ for every $k\in\FBN$.
Moreover, $ g'(x) = f(x),$ so that $\sup_{x\in\FBR}\langle x \rangle^{k} |g^{(l) }(x)|<+\infty$ for every $k\in\FBN$
and $l\geq 1$.
Therefore, $ g \in   \mathcal{S} (\mathbb{R}) $.
Furthermore, for any $n\in\FBN$, we have that
\begin{equation*}
\int_{-\infty}^{+\infty}x^ng(x)\D x=-\int_{-\infty}^{+\infty}x^{n+1}g'(x)\D x=-\int_{-\infty}^{+\infty}x^{n+1}f(x)\D x=0.
\end{equation*}
Hence, $g\in\FBcS_0(\FBR)$ and by the definition of $g$ we have
$$
\FBcF f(\xi)=\FBcF g'(\xi)=(2\pi i) \xi \FBcF g(\xi),\qquad\xi\in\FBR.
$$
The opposite direction is obviously true since the space $\FBcS_0(\FBR)$ is closed under multiplication by a polynomial and this concludes the proof for $d=1$ and $m=1$. The analogous statement holds true for $m>1$ by iterating the above proof $m$-times.
The case $d>1$ follows by analogous computations.
\end{proof}
\par
By Lemma~\ref{FBLm:mainlm}, if $f\in   \mathcal{S}_{0} (\mathbb{R}^2) $, then for any given $k,l \in \FBN $ there exists $g \in   \mathcal{S}_{0} (\mathbb{R}^2) $ such that
\begin{equation*}
\FBcF f (\xi_1, \xi_2) = \xi_1 ^k \xi_2 ^l \FBcF g (\xi_1, \xi_2), \;\;\; (\xi_1, \xi_2) \in \mathbb{R}^2.
\end{equation*}
Moreover, it is worth observing that by Lemmas \ref{FBLm:stability} and \ref{FBLm:mainlm},   $f\in   \mathcal{S}_{0} (\mathbb{R}^d) $ if and only if it satisfies the directional vanishing moments:
$$
\int_{\mathbb{R}} x_j ^{m} f(x_1, x_2, \dots, x_d) dx_j = 0, \forall m \in \mathbb{N}, \;\;\; j = 1,\dots,d.
$$
\par
The space $ \mathcal{S} (\mathbb {H}^{(d,d-1,1)}) $ of highly localized
functions (see also \cite{FBhol1995}) consists of  the functions $ \Phi
\in C^{\infty} (\mathbb{H}^{(d,d-1,1)}) $ such that the seminorms
\begin{align*}
&\rho_{k_1,k_2,l,m} ^{\alpha_1,\alpha_2, \beta,\gamma}(\Phi)=\\
&\sup_{(b,s,a)\in
\mathbb {H}^{(d,d-1,1)}}\langle b_1 \rangle ^{k_1} \,\langle \tilde {b} \rangle ^{k_2} \,\langle s \rangle ^l \,
\left(|a|^{m}+\frac
{1}{|a|^{m}}\right)\left| \partial^{\gamma} _{a}  \partial^{\beta} _{s}
 \partial^{\alpha_2} _{\tilde b}
\partial^{\alpha_1} _{b_1}
\Phi (b,s,a)\right |
\end{align*}
are finite for all $k_1,m,\alpha_1,\gamma \in \mathbb{N}$, $k_2,l,\alpha_2, \beta \in \mathbb{N}^{d-1}$ and where $b=(b_1,\tilde b)\in\FBR\times\FBR^{d-1}$.
In particular, when $d=2$, we denote $\mathbb{S}:=\mathbb{H}^{(d,d-1,1)} = \FBR^2\times\FBR\times\FBR^{\times}$ and $ \mathcal{S} (\mathbb{S}) $ consists of the functions
$ \Phi\in C^{\infty} (\mathbb{S}) $ such that the seminorms
\begin{align} \label{FBnorma-hl2}
\nonumber&p_{k_1,k_2,l,m}^{\alpha_1,\alpha_2, \beta,\gamma}(\Phi) \\
&=\sup_{((b_1,b_2),s,a)\in\mathbb{S}}
\langle b_1 \rangle ^{k_1} \,\langle b_2 \rangle ^{k_2} \,\langle s \rangle ^l \, \left(|a|^{m}+\frac {1}{|a|^{m}}\right)
\left|  \partial^{\gamma} _{a}  \partial^{\beta} _{s} \partial^{\alpha_2} _{b_2} \partial^{\alpha_1} _{b_1} \Phi ((b_1,b_2),s,a)\right |
\end{align}
are finite for all $k_1,k_2,l,m,\alpha_1,\alpha_2,\beta,\gamma \in \mathbb{N}$.
The topology of  $ \mathcal{S} (\mathbb{S}) $  is defined by means of the seminorms \eqref{FBnorma-hl2}. Its dual $ \mathcal{S}' (\mathbb{S}) $ will play a crucial role in
the definition of the shearlet transform of Lizorkin distributions
since it contains the range of this transform.
We fix ${\rm d}\mu(b,s,a)=|a|^{-3}\D b\D s\D a$ as the standard measure on $\mathbb{\mathbb{S}}$, where ${\rm d}b$, ${\rm d}s$ and ${\rm d}a$ are the Lebesgue measures on $\FBR^2$, $\FBR$ and $\FBR^{\times}$, respectively.
If $F$ is a function of at most polynomial growth on $\mathbb{S}$, i.e., if there exist
$C,\nu_1,\nu_2,\nu_3>0$ such that
\begin{equation*}
|F(b,s,a)|\leq C \langle b \rangle^{\nu_1} \langle s \rangle^{\nu_2} \left(|a|^{\nu_3}+\frac
{1}{|a|^{\nu_3}}\right),\qquad (b,s,a)\in \mathbb{S},
\end{equation*}
then we identify $F$ with an element of $ \mathcal{S}' (\mathbb{\mathbb{S}}) $ by means of the equality
\begin{equation}\label{FBdualitysynthesisspace}
( F,\Phi ) = \int_{\FBR^{\times}}\int_{\FBR}\int_{\FBR^2}F(b,s,a)\Phi(b,s,a)\frac{\D b\D s\D a}{|a|^3},
\end{equation}
for every $ \Phi\in  \mathcal{S} (\mathbb{\mathbb{S}})$.
\subsection{The wavelet transform}
The one-dimensional affine group $\mathbb{W}$ is the semidirect product $\FBR\rtimes\FBR^{\times}$ with group operation
\begin{equation*}
(b,a)(b',a')=(b+ab',aa')
\end{equation*}
and left Haar measure $|a|^{-2}\D b\D a$. It acts on $L^2(\FBR)$ by means of the square-integrable representation
\begin{equation*}
W_{b,a}f(x)=|a|^{-\frac{1}{2}}f\left(\frac{x-b}{a}\right),
\end{equation*}
or, equivalently, in the frequency domain
\begin{equation} \label{FBwavefreq}
\FBcF W_{b,a}f(\xi)=|a|^\frac{1}{2} e^{-2\pi i b \xi}\FBcF f(a\xi).
\end{equation}
The wavelet transform is then $\FBcW_{\psi}f(b,a)=\langle f, W_{b,a}\psi\rangle$, which is a multiple of an isometry from $L^2(\FBR)$ into $L^2(\mathbb{W}, |a|^{-2}\D b\D a)$ provided that $\psi\in L^2(\FBR)$ satisfies the admissibility condition, namely the Calder\'on equation,
\begin{equation}\label{FBeqn:calderon}
0<\int_{\FBR}\frac{|\FBcF\psi(\xi)|^2}{|\xi|}\D\xi<+\infty
\end{equation}
and, in such a case, $\psi$ is called a one-dimensional wavelet.
We refer to \cite{FBhol1995, FBPRTV} for the extension of the wavelet transform to Lizorkin distributions.
\subsection{The affine Radon transform}\label{FBsec:radontransform}
The Radon transform of a signal $f$ is a function on the affine projective space
$\mathbb P^1\times \FBR=\{\Gamma\mid \Gamma \text{ line of }\FBR^2\}$ whose value at a line is the integral of $f$ along that line. It is usually defined by parametrizing the lines by pairs $(\theta,q)\in[-\pi,\pi)\times\FBR$ as
\begin{equation*} \Gamma_{\theta,q}= \{ (x,y)\in \FBR^2\mid \cos{\theta}x+ \sin{\theta}y=q\} \end{equation*}
and we refer to it as the polar Radon transform, see \cite{FBhelgason99}. Precisely, for every $f\in L^1(\FBR^2)$, the polar Radon transform of $f$ is the map $\FBcR^{\rm pol}f\colon [-\pi,\pi)\times\FBR\to\FBC$ defined by
\begin{equation}\label{FBeqn:Radonpolare}
{\FBcR}^{\rm pol}f(\theta,q)
=\int_{\FBR} f(q\cos \theta -y\sin \theta, q\sin \theta + y \cos \theta)\ {\rm d}y,
\end{equation}
where the equality \eqref{FBeqn:Radonpolare} holds for almost every $(\theta,q)\in[-\pi,\pi)\times\FBR$.
\par
We can also label the normal vector to a line by
affine coordinates, that is
\begin{equation*} \Gamma_{v,t}= \{ (x,y)\in \FBR^2\mid x+ v y=t\}, \end{equation*}
where the correspondence is $v=\tan\theta$ and $t=q/\cos\theta$.
Horizontal lines can not be represented by this parametrization, but they constitute a negligible
set with respect to the natural measure on $\mathbb P^1\times \FBR$.
The affine Radon transform of any $f\in L^1(\FBR^2)$ is the function $\mathcal{R}^{\rm aff} f:\FBR^2\to \FBC$ defined by
\begin{equation*}
\mathcal{R}^{\rm aff} f (v,t)=  \int_{\FBR}f(t-v y,y) \,\D y, \qquad\text{a.e.}\ (v,t)\in\FBR^2,
\end{equation*}
and it is related to the polar Radon transform by
\begin{equation}
 \mathcal{R}^{\rm aff} f(v,t)=\frac{1}{\sqrt{1+v^2}} \mathcal{R}^{\rm pol} f(\arctan v,\frac{t}{\sqrt{1+v^2}}).
\label{FBeqn:polaff}
\end{equation}
We refer to \cite{FBbardemadeviodo} for the proof.
\par
The choice of the affine parametrization is particularly
well-adapted to the mathematical structure of the shearlet transform (see also \cite{FBgr11} and \cite{FBbardemadeviodo}).
\par
The next result is a formulation of the Fourier slice theorem written for the affine Radon transform. The function $f$ to which $ \mathcal{R}^{\rm aff}$ is applied is taken in $L^1(\FBR^2)\cap L^2(\FBR^2)$.
\begin{proposition}[{\cite[Proposition 6]{FBbardemadeviodo}}]
Define $\psi:\FBR\times(\FBR\setminus\{0\})\rightarrow\FBR^2$ by  $\psi(v,\tau)=(\tau,\tau v)$. For every  $f\in L^1(\FBR^2)\cap L^2(\FBR^2)$ there exists a negligible set $E\subseteq\FBR$ such that for all $v\not\in E$ the function $\mathcal{R}^{\rm aff}f(v,\cdot)$ is in $L^2(\mathbb{R})$ and satisfies
\begin{equation}
\label{FBeqn:fst2}
\mathcal{R}^{\rm aff}f(v,\cdot)=\mathcal{F}^{-1}[\mathcal{F}f\circ\psi(v,\cdot)].
\end{equation}
\label{FBprop:fstaff}
\end{proposition}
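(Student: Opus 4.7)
\emph{Proof plan.} The idea is to derive \eqref{FBeqn:fst2} first at the level of $L^1$-Fourier transforms via a direct Fubini and change-of-variables computation, and then to upgrade it to an identity of $L^2$-functions (for $v$ outside a null set) by a dyadic split in frequency that uses both the $L^1$- and the $L^2$-hypothesis on $f$.

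The first step is quick. For every $v\in\FBR$, Tonelli's theorem together with the substitution $x=t-vy$ at fixed $y$ gives
\begin{equation*}
\int_{\FBR}\int_{\FBR}|f(t-vy,y)|\,\D t\,\D y=\|f\|_{L^1},
\end{equation*}
so $\mathcal{R}^{\rm aff}f(v,\cdot)\in L^1(\FBR)$ for every $v$. The same argument applied to $f(t-vy,y)e^{-2\pi i\tau t}$ in place of $|f(t-vy,y)|$ then yields
\begin{equation*}
\mathcal{F}[\mathcal{R}^{\rm aff}f(v,\cdot)](\tau)=\int_{\FBR^2}f(x,y)e^{-2\pi i\tau(x+vy)}\,\D x\,\D y=\mathcal{F}f(\tau,\tau v)=\mathcal{F}f\circ\psi(v,\tau),
\end{equation*}
so the desired identity already holds as an equality of functions in $L^\infty\cap C_0(\FBR)$.

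The main obstacle is promoting this to an identity in $L^2$ and showing that $\mathcal{R}^{\rm aff}f(v,\cdot)\in L^2(\FBR)$ for a.e. $v$. A naive application of Plancherel to $\mathcal{F}f\in L^2(\FBR^2)$ together with the change of variables $(\xi_1,\xi_2)=(\tau,\tau v)$ of Jacobian $|\tau|$ yields only the \emph{weighted} estimate
\begin{equation*}
\int_{\FBR}\int_{\FBR}|\mathcal{F}f(\tau,\tau v)|^2|\tau|\,\D\tau\,\D v=\|f\|_{L^2}^2,
\end{equation*}
so Fubini produces a null set $E\subseteq\FBR$ outside which $\int|\mathcal{F}f(\tau,\tau v)|^2|\tau|\,\D\tau<\infty$. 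To remove the factor $|\tau|$ I would split $\FBR=\{|\tau|<1\}\cup\{|\tau|\ge 1\}$: on the high-frequency part the weight is already $\ge 1$ and the weighted estimate controls the unweighted integral; on the low-frequency part the $L^1$-hypothesis on $f$ gives $\|\mathcal{F}f\|_{L^\infty}\le\|f\|_{L^1}$, whence $\int_{|\tau|<1}|\mathcal{F}f(\tau,\tau v)|^2\,\D\tau\le 2\|f\|_{L^1}^2$ uniformly in $v\in\FBR$. Together these bounds give $\mathcal{F}f\circ\psi(v,\cdot)\in L^2(\FBR)$ for all $v\notin E$.

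Finally, $\mathcal{R}^{\rm aff}f(v,\cdot)\in L^1(\FBR)$ has its $L^1$-Fourier transform in $L^2(\FBR)$, so consistency of the $L^1$- and $L^2$-Fourier transforms on $L^1\cap L^2$ forces $\mathcal{R}^{\rm aff}f(v,\cdot)\in L^2(\FBR)$ for $v\notin E$, and applying $\mathcal{F}^{-1}$ in the $L^2$-sense to the pointwise identity established above yields \eqref{FBeqn:fst2}.
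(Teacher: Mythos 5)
Your proof is correct. Note that the paper does not give its own proof of this proposition --- it explicitly defers to \cite[Proposition 6]{FBbardemadeviodo} --- but your argument (Fubini with the substitution $x=t-vy$ for the $L^1$-level slice identity, the weighted Plancherel identity $\int_{\FBR}\int_{\FBR}|\mathcal{F}f(\tau,\tau v)|^2|\tau|\,\D\tau\,\D v=\|f\|^2$ coming from the Jacobian of $\psi$ to produce the null set $E$, the low/high frequency split using $\|\mathcal{F}f\|_{L^\infty}\le\|f\|_{L^1}$ to discard the weight, and the standard fact that an $L^1$ function whose Fourier transform is square-integrable lies in $L^2$) is complete and is the standard route to the affine Fourier slice theorem, so nothing further is needed.
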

We refer to \cite{FBbardemadeviodo} for the proof of Proposition~\ref{FBprop:fstaff} and we refer to \cite{FBhelgason99} as a classical reference for the Radon transform theory.

\section{The Shearlet transform}
\label{FBsec:3}
In this section we introduce the shearlet transform. Then, we recall the intertwining result proved in \cite{FBbardemadeviodo}, see Theorem~\ref{FBteo:teocentrale} below, and we give an idea of how it
can be exploited to derive continuity properties of the shearlet transform when restricted to the Lizorkin space of test functions, see Theorem~\ref{FBthm:continuityshearlettransform}. We refer to \cite{FBfolland16}
 as a classical reference for the theory of group representations of locally compact groups and to \cite{FBkula12} for a complete overview of the shearlet analysis.
\par
The standard shearlet group is the semidirect product
$G=\FBR^2\rtimes K$, where $K=\{S_sA_a\in {\rm GL}(2,\FBR):s\in\FBR,a\in\FBR^{\times}\}$ with
\begin{equation*}
S_s=\left[\begin{matrix}1 & -s\\ 0 & 1\end{matrix}\right],\qquad A_a=a\left[\begin{matrix}1 & 0\\ 0 & |a|^{-1/2}\end{matrix}\right].
\end{equation*}
We can identify the element $S_sA_a$ with the pair $(s,a)$ and write $(b,s,a)$ for the elements in $G$. With this identification the product law amounts to
\begin{equation*}
(b,s,a)(b',s',a')=(b+S_sA_ab',s+|a|^{1/2}s',aa').
\end{equation*}
A left Haar measure of $G$ is
\begin{equation*}
{\rm d}\mu(b,s,a)=|a|^{-3}{\rm d}b{\rm d}s{\rm d}a,
\end{equation*}
with ${\rm d}b$, ${\rm d}s$ and ${\rm d}a$ the Lebesgue measures on $\FBR^2$, $\FBR$ and $\FBR^{\times}$, respectively.
The group $G$ acts on $L^2(\FBR^2)$ via the square-integrable representation
\begin{equation*}
S_{b,s,a}f(x)=|a|^{-3/4}f(A_a^{-1}S_s^{-1}(x-b)),
\end{equation*}
or, equivalently, in the frequency domain
\begin{equation} \label{FBshearfreq}
\FBcF S_{b,s,a}f(\xi)=|a|^{3/4} e^{-2\pi i b \xi}\FBcF f(A_a{^t\!S_s}\xi).
\end{equation}
\begin{definition} \label{FBdefsheartransf}
Fix $\psi\in L^2(\FBR^2)$. The shearlet transform associated to $\psi$ is the map $\FBcS_{\psi}\colon L^2(\FBR^2)\to C(G)\cap L^{\infty}(G,{\rm d}\mu)$ defined by
\begin{equation*}
\FBcS_{\psi}f(b,s,a)=\langle f,S_{b,s,a}\psi\rangle
= |a|^{-\frac{3}{4}}\int_{\FBR^2}f(x)\overline{\psi(A_{a}^{-1} S_{s}^{-1}(x-b))}\D x.
\end{equation*}
\end{definition}
\par
It is well-known that the shearlet transform $\FBcS_{\psi}$
is a non-trivial multiple of an isometry from $L^2(\FBR^2)$ into $L^2(G)$ provided that $\psi\in L^2(\FBR^2)$ satisfies the admissibility condition
\begin{equation} \label{FBeqn:admvect}
0<C_{\psi}=\int_{\FBR^2}\frac{|\FBcF\psi(\xi)|^2}{|\xi_1|^2}\D\xi<+\infty,
\end{equation}
where $\xi=(\xi_1,\xi_2)\in\FBR^2$, or equivalently
\begin{equation*}
\int_{\FBR^{\times}}\int_{\FBR}|\FBcF\psi(A_a{^{t}\!S_s}\xi)|^2\D s\frac{\D a}{|a|^{3/2}}=C_{\psi},\qquad \text{for a.e. $\xi\in\FBR^2/\{0\}$},
\end{equation*}
see e.g. \cite{FBdahlke2008}. Furthermore, in such case, we have the reconstruction formula
\begin{equation}\label{FBreconstructionformulashearlet}
 f = \frac{1}{C_{\psi}}  \int_{\FBR^{\times}}\int_{\FBR}\int_{\FBR^2}
 \FBcS_{\psi}f(b,s,a) \,     S_{b,s,a}\psi\ \frac{\D b\D s\D a}{|a|^3},
\end{equation}
where the integral converges in the weak sense.
\par
We now recall part of the results in \cite{FBbardemadeviodo}. We fix $\psi\in L^2(\FBR^2)$ of the form
\begin{equation}\label{FBeqn:shearletfactorization2}
\FBcF\psi(\xi_1,\xi_2)=\FBcF\psi_1(\xi_1)\FBcF\psi_2\left(\frac{\xi_2}{\xi_1}\right),\quad (\xi_1,\xi_2)\in\FBR^2,\, \xi_1\ne0,
\end{equation}
with $\psi_1\in L^2(\FBR)$ satisfying the conditions
\begin{equation}\label{FBeq:4}
0<\int_{\FBR}\frac{|\FBcF\psi_1(\tau)|^2}{|\tau|}\ {\rm d}\tau<+\infty,\qquad\int_{\FBR}|\tau|^2|\FBcF\psi_1(\tau)|^2\ {\rm d}\tau<+\infty
\end{equation}
and $\psi_2\in L^2(\FBR)$.
Then, $\psi$ satisfies the admissible condition \eqref{FBeqn:admvect} and the function $\chi_1\in L^2(\FBR)$ defined by
\begin{equation}\label{FBeqn:phi}
\mathcal{F}\chi_1(\tau)=|\tau|\mathcal{F}\psi_{1}(\tau)
\end{equation}
is a one-dimensional wavelet, i.e. it satisfies \eqref{FBeqn:calderon}. We are now ready to state one of the central results in \cite{FBbardemadeviodo} which shows that the shearlet transform is the composition of the affine Radon transform with a one-dimensional wavelet, followed by a convolution with a scale-dependent filter.
\begin{theorem}[{\cite[Corollary 12]{FBbardemadeviodo}}]
For any $f\in L^1(\FBR^2)\cap L^2(\FBR^2)$ and $((b_1,b_2),s,a)\in \FBR^2\times\FBR\times\FBR^{\times}$,
\begin{equation}\label{FBeqn:achamain}
\mathcal{S}_{\psi}f((b_1,b_2),s,a)=|a|^{-\frac{3}{4}}\int_{\mathbb{R}}\mathcal{W}_{\chi_1}(\FBcR^{\rm aff} f(v,\cdot))(b_1+vb_2,a)\overline{\phi_2\left(\frac{v-s}{|a|^{1/2}}\right)}\ {\rm d}v,
\end{equation}
where $\phi_2=\mathcal{F}\psi_2$.\label{FBteo:teocentrale}
\end{theorem}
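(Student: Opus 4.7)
The plan is to compute $\FBcS_\psi f$ in the frequency domain and then unwind the factorization of $\psi$ against the Fourier slice theorem. Since both $f$ and $S_{b,s,a}\psi$ lie in $L^2(\FBR^2)$, Plancherel together with \eqref{FBshearfreq} gives
$$\FBcS_\psi f(b,s,a)=\langle\FBcF f,\FBcF S_{b,s,a}\psi\rangle=|a|^{3/4}\int_{\FBR^2}\FBcF f(\xi)\,\overline{\FBcF\psi(A_a{}^{t}\!S_s\xi)}\,e^{2\pi i b\cdot\xi}\,\D\xi.$$
A direct calculation yields $A_a{}^{t}\!S_s\xi=(a\xi_1,\,a|a|^{-1/2}(\xi_2-s\xi_1))$, so by the factorization \eqref{FBeqn:shearletfactorization2},
$$\FBcF\psi(A_a{}^{t}\!S_s\xi)=\FBcF\psi_1(a\xi_1)\,\FBcF\psi_2\!\left(\frac{\xi_2/\xi_1-s}{|a|^{1/2}}\right)$$
for a.e.\ $\xi\in\FBR^2$.

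Next I would change variables $(\xi_1,\xi_2)=(\tau,\tau v)$, a bijection off the null set $\{\xi_1=0\}$ with Jacobian $|\tau|$. Since $b_1\xi_1+b_2\xi_2=\tau(b_1+vb_2)$, and since Proposition~\ref{FBprop:fstaff} provides $\FBcF f(\tau,\tau v)=\FBcF[\FBcR^{\rm aff}f(v,\cdot)](\tau)$ for a.e.\ $v$, the integral becomes
$$\FBcS_\psi f(b,s,a)=|a|^{3/4}\int_{\FBR}\overline{\FBcF\psi_2\!\left(\frac{v-s}{|a|^{1/2}}\right)}\,I(v)\,\D v,$$
where
$$I(v):=\int_{\FBR}\FBcF[\FBcR^{\rm aff}f(v,\cdot)](\tau)\,\overline{\FBcF\psi_1(a\tau)}\,|\tau|\,e^{2\pi i\tau(b_1+vb_2)}\,\D\tau.$$

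The third step is to identify $I(v)$ as a one-dimensional wavelet coefficient. The key algebraic trick is that the Jacobian factor $|\tau|$ is exactly what converts $\psi_1$ into the admissible wavelet $\chi_1$: indeed, \eqref{FBeqn:phi} gives $|\tau|\,\FBcF\psi_1(a\tau)=|a|^{-1}\FBcF\chi_1(a\tau)$, and from \eqref{FBwavefreq}, with $B:=b_1+vb_2$,
$$\overline{\FBcF\chi_1(a\tau)}\,e^{2\pi i B\tau}=|a|^{-1/2}\,\overline{\FBcF W_{B,a}\chi_1(\tau)}.$$
A second application of Plancherel therefore turns $I(v)$ into $|a|^{-3/2}\langle\FBcR^{\rm aff}f(v,\cdot),W_{B,a}\chi_1\rangle=|a|^{-3/2}\FBcW_{\chi_1}(\FBcR^{\rm aff}f(v,\cdot))(B,a)$, and combining the constants $|a|^{3/4}\cdot|a|^{-3/2}=|a|^{-3/4}$ yields exactly \eqref{FBeqn:achamain}.

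The main obstacle I foresee is justifying the interchanges of integration under the weak hypothesis $f\in L^1(\FBR^2)\cap L^2(\FBR^2)$: one must establish absolute convergence of the double $(v,\tau)$-integral using that $\FBcF f\in L^2\cap L^\infty$, the $\tau$-decay built into the moment condition in \eqref{FBeq:4}, and the fact that $\FBcR^{\rm aff}f(v,\cdot)\in L^2(\FBR)$ for a.e.\ $v$; one must also argue that the exceptional $v$-set from Proposition~\ref{FBprop:fstaff} does not affect the outer integral.
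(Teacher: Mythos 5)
Your argument is correct and is essentially the route the paper takes: although the paper defers the formal proof to \cite{FBbardemadeviodo}, the two derivations of \eqref{FBeq:shearcoefffrequency} given immediately after the theorem are precisely your steps chained together --- Plancherel with \eqref{FBshearfreq}, the affine change of variables $(\xi_1,\xi_2)=(\tau,\tau v)$ with Jacobian $|\tau|$ and the factorization \eqref{FBeqn:shearletfactorization2} on one side, and the Fourier slice theorem \eqref{FBeqn:fst2} together with \eqref{FBwavefreq} to recognize the inner $\tau$-integral as the wavelet coefficient $\FBcW_{\chi_1}(\FBcR^{\rm aff}f(v,\cdot))(b_1+vb_2,a)$ on the other. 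Your bookkeeping of the constants ($|a|^{3/4}\cdot|a|^{-1}\cdot|a|^{-1/2}=|a|^{-3/4}$) and your remarks on absolute convergence and the negligible exceptional set of $v$ are accurate.
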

In \eqref{FBeqn:achamain} the wavelet transform is one-dimensional and acts on the variable $t$. We refer to \cite{FBbardemadeviodo} for the proof.
\par
By direct computation, applying the Plancherel theorem, the Fourier slice theorem \eqref{FBeqn:fst2} and equation \eqref{FBwavefreq}, we obtain the useful formula
\begin{align}\label{FBeq:shearcoefffrequency}
&\nonumber\mathcal{S}_{\psi}f((b_1,b_2),s,a)=|a|^{-\frac{3}{4}}\int_{\mathbb{R}}\mathcal{W}_{\chi_1}(\mathcal{R}^{\rm aff}f(v,\cdot))(b_1+vb_2,a)\overline{\phi_2\left(\frac{v-s}{|a|^\frac{1}{2}}\right)}{\rm d}v\\
\nonumber&=|a|^{-\frac{1}{4}}\int_{\mathbb{R}}\int_{\FBR}\FBcF\mathcal{R}^{\rm aff}f(v,\cdot)(\tau)\overline{\FBcF\chi_1(a\tau)}e^{2\pi i \tau(b_1+vb_2)}\D\tau\overline{\phi_2\left(\frac{v-s}{|a|^\frac{1}{2}}\right)}{\rm d}v\\
&=|a|^{-\frac{1}{4}}\int_{\mathbb{R}}\int_{\FBR}\FBcF f(\tau,\tau v)\overline{\FBcF\chi_1(a\tau)}e^{2\pi i \tau(b_1+vb_2)}\overline{\phi_2\left(\frac{v-s}{|a|^\frac{1}{2}}\right)}\D\tau{\rm d}v,
\end{align}
for every $((b_1,b_2),s,a)\in\FBR^2\times\FBR\times\FBR^{\times}$. Alternatively, we can obtain formula~\eqref{FBeq:shearcoefffrequency} as a direct consequence of equations~\eqref{FBshearfreq} and \eqref{FBeqn:shearletfactorization2} combined with the Plancherel theorem and a change of variable in affine coordinates
\begin{align}
&\nonumber\mathcal S_\psi f(b,s,a)=\langle \FBcF f,\FBcF S_{b,s,a},\psi\rangle\\
&\nonumber=|a|^{\frac{3}{4}}\int_{\FBR}\int_{\FBR}\mathcal{F} f(\xi_1,\xi_2)e^{2\pi i (b_1\xi_1+b_2\xi_2)}\overline{\mathcal{F}\psi(a\xi_1,a|a|^{-\frac{1}{2}}(\xi_2-s\xi_1))}{\rm d}\xi_1{\rm d}\xi_2\\
&\nonumber=|a|^{\frac{3}{4}}\int_{\FBR}\int_{\FBR}\mathcal{F} f(\tau,\tau v)e^{2\pi i \tau(b_1+vb_2)}\overline{\mathcal{F}\psi(a\tau,a|a|^{-\frac{1}{2}}\tau(v-s))}|\tau|{\rm d}\tau{\rm d}v\\
&\nonumber=|a|^{\frac{3}{4}}\int_{\FBR}\int_{\FBR}\mathcal{F} f(\tau,\tau v)e^{2\pi i \tau(b_1+vb_2)}\overline{|\tau|\mathcal{F}\psi_1(a\tau)\mathcal{F}\psi_2\left(\frac{v-s}{|a|^{\frac{1}{2}}}\right)}{\rm d}\tau{\rm d}v,
 \end{align}
 which is exactly formula~\eqref{FBeq:shearcoefffrequency} since by definition $\mathcal{F}\chi_1=|\cdot|\mathcal{F}\psi_{1}$ and $\phi_2=\mathcal{F}\psi_2$.
\par
We are now ready to state our first main result. From now on, everytime we consider an admissible vector $\psi$, we assume that it is of the form \eqref{FBeqn:shearletfactorization2} with $\chi_1\in\mathcal{S}_0(\FBR)$ and $\phi_2\in \FBcS(\FBR)$. We observe that if $\chi_1\in\mathcal{S}_0(\FBR)$, then $\psi_1$ defined by \eqref{FBeqn:phi} as $\mathcal{F}\psi_1(\tau)=|\tau|^{-1}\mathcal{F}\chi_{1}(\tau)$ satisfies \eqref{FBeq:4}. Furthermore, under these assumptions, $\FBcF\psi$ extends to a function belonging to the Fourier Lizorkin space $\hat{\mathcal{S}}_0 (\mathbb{R}^{2})$ and, with slight abuse of notation, $\psi$ denotes both the admissible vector defined by \eqref{FBeqn:shearletfactorization2} and its Schwartz extension over $\FBR^2$.
%
%
\begin{theorem}[\cite{FBbartoluccithesis}]
The shearlet transform  $ \mathcal S_\psi$ is a continuous mapping from
$ \mathcal{S}_{0} (\mathbb{R}^2)$ into $ \mathcal{S}(\mathbb {S}) $.\label{FBthm:continuityshearlettransform}
\end{theorem}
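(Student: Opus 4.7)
I would establish continuity by showing that for every choice of multi-indices and exponents, the seminorm $p^{\alpha_{1},\alpha_{2},\beta,\gamma}_{k_{1},k_{2},l,m}(\mathcal{S}_{\psi}f)$ is dominated by a continuous seminorm of $f$ on $\mathcal{S}_{0}(\mathbb{R}^{2})$. The natural starting point is the frequency-side representation \eqref{FBeq:shearcoefffrequency},
\begin{equation*}
\mathcal{S}_{\psi}f(b,s,a)=|a|^{-\frac{1}{4}}\int_{\mathbb{R}}\int_{\mathbb{R}}\mathcal{F}f(\tau,\tau v)\,\overline{\mathcal{F}\chi_{1}(a\tau)\,\phi_{2}\!\left(\tfrac{v-s}{|a|^{1/2}}\right)}\,e^{2\pi i\tau(b_{1}+vb_{2})}\,d\tau\,dv,
\end{equation*}
which makes all the relevant decay/vanishing properties transparent: $\mathcal{F}\chi_{1}\in\hat{\mathcal{S}}_{0}(\mathbb{R})$ vanishes at $0$ to infinite order, $\phi_{2}\in\mathcal{S}(\mathbb{R})$, and $\mathcal{F}f\in\hat{\mathcal{S}}_{0}(\mathbb{R}^{2})$ by Lemma~\ref{FBLm:stability}.

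\textbf{Main steps.} First I would differentiate under the integral: the $b_{j}$-derivatives pull down $\tau^{\alpha_{1}+\alpha_{2}}v^{\alpha_{2}}$; the $s$-derivative becomes $|a|^{-\beta/2}\phi_{2}^{(\beta)}((v-s)/|a|^{1/2})$; and $\partial_{a}^{\gamma}$ is expanded by Leibniz among the three $a$-dependent factors $|a|^{-1/4}$, $\mathcal{F}\chi_{1}(a\tau)$ and $\phi_{2}((v-s)/|a|^{1/2})$, yielding a finite sum of structurally analogous integrals. Next, the weights $\langle b_{1}\rangle^{k_{1}}\langle b_{2}\rangle^{k_{2}}$ are absorbed by integration by parts in the Fourier-conjugate variables $(\xi_{1},\xi_{2})=(\tau,\tau v)$, moving derivatives onto $\mathcal{F}f$ and the $\psi$-factors (both of which remain Schwartz). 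The factor $\langle s\rangle^{l}$ is split by Peetre's inequality $\langle s\rangle\leq\sqrt{2}\,\langle v-s\rangle\,\langle v\rangle$: the first piece is killed by the decay of $\phi_{2}^{(\beta)}((v-s)/|a|^{1/2})$ at the cost of at most a factor $(1+|a|^{1/2})^{l}$, while the second piece has to be paired with the decay of $\mathcal{F}f(\tau,\tau v)$.

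\textbf{Obstacle and its resolution.} The factor $|a|^{m}+|a|^{-m}$ splits into two regimes: for $|a|\to\infty$ the rapid decay of $\mathcal{F}\chi_{1}(a\tau)$ beats any polynomial in $|a|$ on the support of $\mathcal{F}f$, while for $|a|\to 0$ the Lizorkin estimate $|\mathcal{F}\chi_{1}(a\tau)|\lesssim|a\tau|^{M}$, valid for arbitrarily large $M$, cancels arbitrary negative powers of $|a|$. The main technical obstacle is the interplay between the $\langle v\rangle^{l}$ piece coming from Peetre and the decay of $\mathcal{F}f(\tau,\tau v)$ when $|\tau|$ is small: there $\mathcal{F}f$ alone does not produce $v$-decay, and the naive bound $\langle v\rangle\leq(1+1/|\tau|)\langle \tau v\rangle$ introduces a $1/|\tau|^{l}$ singularity. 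I would overcome this by invoking Lemma~\ref{FBLm:mainlm} to write $\mathcal{F}f(\tau,\tau v)=\tau^{N}\mathcal{F}g(\tau,\tau v)$ with $g\in\mathcal{S}_{0}(\mathbb{R}^{2})$ and $N$ chosen large enough, which removes the singularity at the cost of extra powers of $|a|$ that are harmlessly absorbed into the $|a|^{\pm m}$ bookkeeping. The remaining integral is then dominated by a Schwartz seminorm of $\mathcal{F}g$, and the continuity of the map $f\mapsto g$ in $\mathcal{S}_{0}(\mathbb{R}^{2})$ together with Parseval produces the desired seminorm bound on $f$.
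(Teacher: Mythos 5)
Your proposal follows essentially the same route as the paper's (itself only sketched) proof: both rest on the frequency-domain formula \eqref{FBeq:shearcoefffrequency} together with Lemma~\ref{FBLm:mainlm} to trade singular powers of $\tau$ and negative powers of $|a|$ for Schwartz seminorms of a modified $g\in\mathcal{S}_0(\mathbb{R}^2)$, mimicking the ridgelet argument of \cite{FBkpsv2014}. The only organizational difference is that the paper first reduces to the case $\alpha_1=\alpha_2=\beta=\gamma=k_1=k_2=l=0$ and treats only the weight $|a|^{m}+|a|^{-m}$, whereas you carry all the indices through directly; this is not a substantive divergence.
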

\begin{proof}
We give a sketch of the proof and we refer to \cite{FBbartoluccithesis} for the details. The goal is to prove that for every $f \in \mathcal{S}_{0} (\mathbb{R}^2)$, given $k_1,k_2,l,m,\alpha_1,\alpha_2,\beta,\gamma \in \mathbb{N}$, there exist $ \nu \in \FBN$ such that
\begin{equation*}
\rho_{k_1,k_2,l,m} ^{\alpha_1,\alpha_2, \beta,\gamma} ( \mathcal S_\psi f) \lesssim  \rho_{\nu} (f ).
\end{equation*}
The first step in the proof is to show that, without loss of generality, we can assume $\alpha_1=\alpha_2=\beta=\gamma=k_1=k_2=l=0$. Then, it is enough to prove that for every $m\in\FBN$
\begin{equation*}
\rho_{0,0,0,m} ^{0,0,0,0} ( \mathcal S_\psi f) \lesssim  \rho_{\nu} (f ),
\end{equation*}
for some $ \nu \in \FBN$. Our approach mimics the one followed in \cite{FBkpsv2014} to prove the continuity of the ridgelet transform on the Lizorkin space of test functions. Formula \eqref{FBeq:shearcoefffrequency} and Lemma~\ref{FBLm:mainlm} play a crucial role throughout all the proof. In particular, equation \eqref{FBeq:shearcoefffrequency} has the same role of formula (9) in \cite{FBkpsv2014}.
\end{proof}
\section{The shearlet synthesis operator}
\label{FBsec:4}
In this section we introduce the shearlet synthesis operator and we prove its continuity on the space $\mathcal{S}(\mathbb{S})$ of highly localized  functions, see Theorem~\ref{FBthm:continuitysynthesisoperator} below.
\par
The reconstruction formula \eqref{FBreconstructionformulashearlet}  suggests to define a linear operator $\FBcS_{\psi}^t$ which maps functions over $\mathbb{S} = \FBR^2\times\FBR\times\FBR^{\times}$ to functions over the Euclidean plane $\FBR^2$. Given $\psi\in \FBcS(\FBR^2)$, we
define the shearlet synthesis operator $\FBcS_{\psi}^t$ by
\begin{align}\label{FBsynthesisoperator}
&\FBcS_{\psi}^t \Phi(x)=\int_{\FBR^{\times}}\int_{\FBR}\int_{\FBR^2} \Phi(b,s,a)\,\, {S}_{b,s,a}\psi(x)
\,\,\frac{\D b \D s  \D a}{|a|^3},\qquad x\in\FBR^2,
\end{align}
for any function $\Phi$ for which the integral converges. For example,
the integral in \eqref{FBsynthesisoperator} is absolutely convergent if $\Phi\in \FBcS(\mathbb{S})$.
Furthermore, if $f\in L^1(\FBR^2)\cap L^2(\FBR^2)$ and $\Phi\in \FBcS(\mathbb{S})$, then by Fubini theorem we have that
\begin{align}\label{FBeq:dualityrelationfirst}
\nonumber\int_{\FBR^2}f(x)\FBcS_{\overline{\psi}}^t\Phi(x)\D x&=\int_{\FBR^2}f(x)\int_{\FBR^{\times}}\int_{\FBR}\int_{\FBR^2} \Phi(b,s,a)
\,\, \overline{{S}_{b,s,a}\psi(x)}\,\,\frac{\D b \D s  \D a}{|a|^3}\D x\\
\nonumber&=\int_{\FBR^{\times}}\int_{\FBR}\int_{\FBR^2}\Phi(b,s,a)\int_{\FBR^2}f(x)\overline{{S}_{b,s,a}\psi(x)}\D x
\,\,\frac{\D b \D s  \D a}{|a|^3}\\
&=\int_{\FBR^{\times}}\int_{\FBR}\int_{\FBR^2}\FBcS_{\psi}f(b,s,a)\Phi(b,s,a)\,\,\frac{\D b \D s  \D a}{|a|^3}.
\end{align}
In Theorem~\ref{FBthm:continuitysynthesisoperator} we will show that the shearlet synthesis operator $\mathcal S_\psi^t$ is a continuous operator from $\mathcal{S}(\mathbb{S})$ into $\mathcal{S}(\mathbb{R}^2)$. Then, since $L^1(\FBR^2)\cap L^2(\FBR^2)$ naturally embeds into $\FBcS'(\FBR^2)$, by the identification \eqref{FBdualitysynthesisspace}, we may write \eqref{FBeq:dualityrelationfirst} as
\begin{equation*}
( f, \FBcS_{\overline{\psi}}^t\Phi ) = (\FBcS_{\psi}f,\Phi ).
\end{equation*}
This duality relation will motivate our definition of the distributional shearlet transform in Section \ref{FBsec:5}.
\par
We recall that we consider admissible vectors $\psi$ of the form \eqref{FBeqn:shearletfactorization2} with $\chi_1$ defined by \eqref{FBeqn:phi} in $\mathcal{S}_0(\FBR)$ and $\phi_2=\mathcal{F}\psi_2\in \FBcS(\FBR)$.

\begin{theorem}[\cite{FBbartoluccithesis}]
The shearlet synthesis operator $\mathcal S_\psi^t$ is a bounded operator from $\mathcal{S}(\mathbb{S})$ into $\mathcal{S}_0 (\mathbb{R}^2)$. \label{FBthm:continuitysynthesisoperator}
\end{theorem}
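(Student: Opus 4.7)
The plan is to pass to the Fourier side and appeal to Lemma~\ref{FBLm:stability}, which provides a topological isomorphism $\mathcal{F}:\mathcal{S}_0(\mathbb{R}^2)\to\hat{\mathcal{S}}_0(\mathbb{R}^2)$; it therefore suffices to show that $\mathcal{F}\circ\mathcal{S}_\psi^t$ maps $\mathcal{S}(\mathbb{S})$ continuously into $\hat{\mathcal{S}}_0(\mathbb{R}^2)$. Combining \eqref{FBshearfreq} with Fubini, I would first write
\begin{equation*}
\mathcal{F}(\mathcal{S}_\psi^t\Phi)(\xi)=\int_{\mathbb{R}^\times}\!\!\int_{\mathbb{R}}\!\!\int_{\mathbb{R}^2}\Phi(b,s,a)\,e^{-2\pi i b\cdot \xi}\,\mathcal{F}\psi(A_a{^t\!S_s}\xi)\,\frac{\D b\,\D s\,\D a}{|a|^{9/4}},
\end{equation*}
and then show that the right-hand side is a member of $\hat{\mathcal{S}}_0(\mathbb{R}^2)$ whose seminorms are bounded by finitely many of the seminorms $p_{k_1,k_2,l,m}^{\alpha_1,\alpha_2,\beta,\gamma}(\Phi)$.

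For the Schwartz decay, the apparent singularity of $|a|^{-9/4}$ at $a=0$ is absorbed by the factor $|a|^m+|a|^{-m}$ appearing in \eqref{FBnorma-hl2}: choosing $m$ large enough yields $|\Phi(b,s,a)|\lesssim|a|^m$ on $|a|\leq 1$ and $|\Phi|\lesssim|a|^{-m}$ on $|a|\geq 1$, while the weights $\langle b_1\rangle^{k_1}\langle b_2\rangle^{k_2}\langle s\rangle^l$ supply the decay needed for the $\D b\,\D s$ integration. To obtain rapid decay in $\xi$, I would integrate by parts in $b$, transferring each factor $\xi^\alpha$ onto $\partial_b^\alpha\Phi$, whose seminorms are available by hypothesis. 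Smoothness and $\xi$-differentiation under the integral are justified by the same integrability estimates and produce terms of the form $(\partial^\delta\mathcal{F}\psi)(A_a{^t\!S_s}\xi)$ multiplied by polynomials in the entries of $A_a{^t\!S_s}$; gathering these yields an estimate of the shape $\rho_\nu(\mathcal{F}(\mathcal{S}_\psi^t\Phi))\lesssim\sum p_{\cdots}^{\cdots}(\Phi)$ with a finite sum on the right.

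The vanishing-moment property---equivalently, $\partial^\gamma\mathcal{F}(\mathcal{S}_\psi^t\Phi)(0)=0$ for every $\gamma\in\mathbb{N}^2$---follows from the standing assumption $\mathcal{F}\psi\in\hat{\mathcal{S}}_0(\mathbb{R}^2)$. Since $\xi\mapsto A_a{^t\!S_s}\xi$ is linear, the chain rule expresses $\partial_\xi^\gamma[\mathcal{F}\psi(A_a{^t\!S_s}\xi)]$ as a linear combination of terms $(\partial^\delta\mathcal{F}\psi)(A_a{^t\!S_s}\xi)$ with $|\delta|=|\gamma|$ and polynomial coefficients in the entries of $A_a{^t\!S_s}$; at $\xi=0$ each factor $(\partial^\delta\mathcal{F}\psi)(0)$ vanishes, so the corresponding derivative of $\mathcal{F}(\mathcal{S}_\psi^t\Phi)$ vanishes at the origin. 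Combined with the estimates of the previous paragraph, this yields $\mathcal{F}(\mathcal{S}_\psi^t\Phi)\in\hat{\mathcal{S}}_0(\mathbb{R}^2)$, and hence $\mathcal{S}_\psi^t\Phi\in\mathcal{S}_0(\mathbb{R}^2)$ with continuous dependence on $\Phi$.

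The main obstacle is the careful bookkeeping in the interplay between $s$ and $|a|^{-1/2}$ inside $\mathcal{F}\psi(A_a{^t\!S_s}\xi)$: explicitly $A_a{^t\!S_s}\xi=(a\xi_1,\,a|a|^{-1/2}(\xi_2-s\xi_1))$, so both $s$ and $|a|^{-1/2}$ may be arbitrarily large. Using the factorization \eqref{FBeqn:shearletfactorization2} to split $\mathcal{F}\psi(A_a{^t\!S_s}\xi)=\mathcal{F}\psi_1(a\xi_1)\mathcal{F}\psi_2(|a|^{-1/2}(\xi_2/\xi_1-s))$ for $\xi_1\neq 0$, one absorbs the polynomial growth in $s$ and $|a|^{-1/2}$ into the Schwartz decay of $\mathcal{F}\psi_1$ and $\mathcal{F}\psi_2$, and then employs the decay of $\Phi$ in $s$ (quantified by $\langle s\rangle^l$) to make the $s$-integration uniformly finite in $\xi$. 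This step parallels the estimates used in the proof of Theorem~\ref{FBthm:continuityshearlettransform} and the analogous argument for the ridgelet synthesis operator in \cite{FBkpsv2014}.
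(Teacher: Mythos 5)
Your proposal is correct, but it is organized quite differently from the paper's proof, so a comparison is worthwhile. The paper keeps the target seminorms on the physical side: starting from the partial Fourier transform of $\Phi$ in the variable $b$ only (formula \eqref{FBfrequencysynthesisoperator}), it bounds $|\partial_{x_j}^{\alpha}(\mathcal S_\psi^t\Phi)|$ and $|x_j^{k}(\mathcal S_\psi^t\Phi)|$ directly. The central difficulty there is the explicit factor $|\xi_1|^{-1}$ coming from $\mathcal F\psi_1=|\cdot|^{-1}\mathcal F\chi_1$, together with the further negative powers of $\xi_1$ produced when $\partial_{\xi_1}^{k}$ falls on $\phi_2\bigl((\xi_2/\xi_1-s)/|a|^{1/2}\bigr)$; these are neutralized by Lemma~\ref{FBLm:mainlm}, which trades $(\mathcal F\chi_1)^{(k_3)}(-a\xi_1)$ for $-a^m\xi_1^m\mathcal F g(-a\xi_1)$ at the price of a factor $a^m$ that then forces the splitting into $|a|<\epsilon$ and $|a|>\epsilon$. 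You instead take the full two-dimensional Fourier transform, invoke the isomorphism $\mathcal F\colon\mathcal S_0(\FBR^2)\to\hat{\mathcal S}_0(\FBR^2)$ from Lemma~\ref{FBLm:stability}, and treat $\mathcal F\psi$ as a single element of $\hat{\mathcal S}_0(\FBR^2)$ composed with the linear map $\xi\mapsto A_a{^t\!S_s}\xi$. Then no singularity at $\xi_1=0$ ever appears in your estimates (the chain-rule coefficients are the matrix entries $a$, $\pm|a|^{1/2}$ and $\pm|a|^{1/2}s$, all polynomially controlled and absorbed by the $\langle s\rangle^{l}$ and $(|a|^{m}+|a|^{-m})$ weights of $\Phi$), and the vanishing-moment property becomes the transparent statement $\partial^{\gamma}\mathcal F(\mathcal S_\psi^t\Phi)(0)=0$, inherited from $\partial^{\delta}\mathcal F\psi(0)=0$, rather than the limit criterion of Holschneider invoked in the paper. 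What your route buys is a cleaner, essentially self-contained treatment of the $\mathcal S_0$-membership and of continuity in one stroke; what it costs is that the genuinely delicate cancellation between $|\xi_1|^{-1}$, $\xi_2/\xi_1$ and the infinite-order vanishing of $\mathcal F\chi_1$ at the origin is entirely delegated to the standing assumption that $\mathcal F\psi$ extends to an element of $\hat{\mathcal S}_0(\FBR^2)$ with globally bounded derivatives --- an assumption the paper does state, so you may use it, but you should flag explicitly that this is where the analogue of Lemma~\ref{FBLm:mainlm} is hiding. One small correction to your last paragraph: the quantity $|a|^{-1/2}$ never enters the chain-rule coefficients of the composite $\mathcal F\psi(A_a{^t\!S_s}\xi)$ (only $|a|^{1/2}$ does), so the unboundedness you worry about there is an artifact of differentiating the factored form $\mathcal F\psi_1(a\xi_1)\mathcal F\psi_2\bigl(|a|^{-1/2}(\xi_2/\xi_1-s)\bigr)$ rather than the smooth extension.
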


\begin{proof}
We start proving the continuity.
We need to show that for every $\Phi\in\FBcS(\mathbb{S})$ and $\nu \in \mathbb{N}$, there exist $k_1,k_2,l,m,\alpha_1,\alpha_2,\beta,\gamma\in \FBN$ and a positive constant $C$ such that
\begin{equation*}
\rho_{\nu}(S^t_{\psi}\Phi)\leq C \rho_{k_1,k_2,l,m} ^{\alpha_1,\alpha_2, \beta,\gamma} (\Phi).
\end{equation*}
We will use the fact that the families $\hat{\rho}_{\nu}(\chi)=\rho_{\nu}(\FBcF\chi)$
and $\hat\rho_{k_1,k_2,l,m} ^{\alpha_1,\alpha_2, \beta,\gamma}(\Phi)=\rho_{k_1,k_2,l,m} ^{\alpha_1,\alpha_2, \beta,\gamma}(\FBcF\Phi)$,
where $\FBcF\Phi$ denotes the Fourier transform of $\Phi$ with respect to the variable $b$, are bases of seminorms for the topologies of $\FBcS_0(\FBR)$ and $\mathcal{S}(\mathbb {S})$, respectively (cf. \cite{FBkpsv2014}). Furthermore, by Plancherel theorem, Fubini theorem, equation \eqref{FBshearfreq} and by the expression of the admissible vector, we obtain
the following formula for the shearlet synthesis operator
\begin{align}\label{FBfrequencysynthesisoperator}
\nonumber&\FBcS_{\psi}^t \Phi(x)\\
\nonumber&=\int_{\FBR^{\times}}\int_{\FBR} |a|^{\frac{3}{4}}\int_{\FBR^2} \FBcF \Phi(\mathbf{\xi},s,a)\,\, e^{2\pi i x\cdot\xi} \overline{\FBcF \psi(-a\xi_1,a|a|^{-\frac{1}{2}}(-\xi_2+s\xi_1))}
\,\,\D \xi_1\D\xi_2\frac{ \D s \D a }{|a|^3}\\
\nonumber&=\int_{\FBR^2}e^{2\pi i x\cdot\xi} \int_{\FBR^{\times}}\int_{\FBR} |a|^{\frac{3}{4}} \FBcF \Phi(\mathbf{\xi},s,a)\,\, \overline{\FBcF \psi(-a\xi_1,a|a|^{-\frac{1}{2}}(-\xi_2+s\xi_1))}
\,\,\frac{ \D s \D a }{|a|^3}\D \xi_1\D \xi_2\\
\nonumber&=\int_{\FBR^2}e^{2\pi i x\cdot\xi} \int_{\FBR^{\times}}\int_{\FBR} |a|^{\frac{3}{4}} \FBcF \Phi(\mathbf{\xi},s,a)\,\, \overline{\FBcF \psi_1(-a\xi_1)\FBcF\psi_2\left(\frac{\xi_2/\xi_1-s}{|a|^{\frac{1}{2}}}\right)}
\,\,\frac{ \D s \D a }{|a|^3}\D \xi_1\D \xi_2\\
&=\int_{\FBR^2}e^{2\pi i x\cdot\xi} \int_{\FBR^{\times}}\int_{\FBR} |a|^{-\frac{1}{4}} \FBcF \Phi(\mathbf{\xi},s,a)\,\,\overline{\frac{\FBcF \chi_1(-a\xi_1)}{ |\xi_1|}\phi_2\left(\frac{\xi_2/\xi_1-s}{|a|^{\frac{1}{2}}}\right)}
\,\,\frac{ \D s \D a }{|a|^3}\D \xi_1\D \xi_2.
\end{align}
\par
By formula \eqref{FBfrequencysynthesisoperator}, for any given $\alpha\in  \mathbb{Z}_+$ we have that
\begin{align*}
&|\partial_{x_1}^{\alpha}(S^t_{\psi}\Phi)(x_1,x_2)|\\
&\lesssim\int_{\FBR^2}\int_{\FBR^{\times}}\int_{\FBR} |a|^{-\frac{1}{4}} |\FBcF \Phi(\mathbf{\xi},s,a)|\,\,\frac{|\FBcF \chi_1(-a\xi_1)|}{|\xi_1|^{1-\alpha}}\left|\phi_2\left(\frac{\xi_2/\xi_1-s}{|a|^{1/2}}\right)\right|
\frac{ \D s \D a }{|a|^3}\D \xi_1\D \xi_2\\
&\lesssim\int_{\FBR^2}\int_{|a|<\epsilon}\int_{\FBR} \frac{(1+|\xi|^2)^{N/2}}{(1+|\xi|^2)^{N/2}}(1+s^2)|a|^{-\frac{13}{4}} |\xi_1|^{\alpha-1}|\FBcF \Phi(\mathbf{\xi},s,a)|\,\,|\FBcF \chi_1(-a\xi_1)|\\
&\left|\phi_2\left(\frac{\xi_2/\xi_1-s}{|a|^{1/2}}\right)\right|
\frac{ \D s \D a }{(1+s^2)}\D \xi_1\D \xi_2\\
&+ \int_{\FBR^2}\int_{|a|>\epsilon}\int_{\FBR} \frac{(1+|\xi|^2)^{N/2}}{(1+|\xi|^2)^{N/2}}(1+s^2)|a|^{-\frac{13}{4}} |\xi_1|^{\alpha-1}|\FBcF \Phi(\mathbf{\xi},s,a)|\,\,|\FBcF \chi_1(-a\xi_1)|\\
&\left|\phi_2\left(\frac{\xi_2/\xi_1-s}{|a|^{1/2}}\right)\right|
\frac{ \D s \D a }{(1+s^2)}\D \xi_1\D \xi_2\\
&\lesssim \rho_{N+\alpha-1,N,2,\frac{13}{4}} ^{0,0,0,0} (\FBcF\Phi)+\rho_{N+\alpha-1,N,2,0} ^{0,0,0,0} (\FBcF\Phi),
\end{align*}
where $N\in\FBN$, $N>2$ and $\epsilon>0$.
The terms of the form $|\partial_{x_2}^{\beta}(S^t_{\psi}\Phi)(x_1,x_2)|$, $\beta\in  \mathbb{Z}_+$, can be estimated in a similar fashion.
\par
Next we consider multiplications by  $x_1 ^k$,  $k\in  \mathbb{Z}_+$.
By formula \eqref{FBfrequencysynthesisoperator}, we have that
\begin{align*}
&|x_1^{k}(S^t_{\psi}\Phi)(x_1,x_2)|\\
&=|\int_{\FBR^2}x_1^ke^{2\pi i x\cdot\xi} \int_{\FBR^{\times}}\int_{\FBR} |a|^{-1/4} \FBcF \Phi(\mathbf{\xi},s,a)\,\, |\xi_1|^{-1}\times\\
&\times\overline{\FBcF \chi_1(-a\xi_1)\phi_2\left(\frac{\xi_2/\xi_1-s}{|a|^{1/2}}\right)}
\frac{ \D s \D a }{|a|^3}\D \xi_1\D \xi_2|\\
&=|\int_{\FBR^2}(2\pi i)^{-k}e^{2\pi i x\cdot\xi} \int_{\FBR^{\times}}\int_{\FBR} |a|^{-1/4} \partial_{\xi_1}^{k}[\FBcF \Phi(\mathbf{\xi},s,a)\,\, |\xi_1|^{-1}\times\\
&\times\overline{\FBcF \chi_1(-a\xi_1)\phi_2\left(\frac{\xi_2/\xi_1-s}{|a|^{1/2}}\right)}]
\frac{ \D s \D a }{|a|^3}\D \xi_1\D \xi_2|\\
&\lesssim \int_{\FBR^2}\int_{\FBR^{\times}}\int_{\FBR} |a|^{-1/4} |\partial_{\xi_1}^{k}[\FBcF \Phi(\mathbf{\xi},s,a)\,\, |\xi_1|^{-1}\times\\
&\times\overline{\FBcF \chi_1(-a\xi_1)\phi_2\left(\frac{\xi_2/\xi_1-s}{|a|^{1/2}}\right)}]|
\frac{ \D s \D a }{|a|^3}\D \xi_1\D \xi_2,
\end{align*}which is less than or equal to a finite sum of addends of the form
\begin{align*}
&\int_{\FBR^2}\int_{\FBR^{\times}}\int_{\FBR} |a|^{-\frac{1}{4}+k_3-\frac{k_4}{2}} |\partial_{\xi_1}^{k_1}\FBcF \Phi(\mathbf{\xi},s,a)| |\xi_1|^{-k_2-k_4}|(\FBcF \chi_1)^{(k_3)}(-a\xi_1)|\times\\
&\times|\xi_2|^{k_4}\left|\phi_2^{(k_4)}\left(\frac{\xi_2/\xi_1-s}{|a|^{1/2}}\right)\right|
\frac{ \D s \D a }{|a|^3}\D \xi_1\D \xi_2\\
&=\int_{\FBR^2}\int_{\FBR^{\times}}\int_{\FBR} |a|^{-\frac{13}{4}+k_3-\frac{k_4}{2}} (1+|\xi|^2)^{N/2}(1+s^2)|\partial_{\xi_1}^{k_1}\FBcF \Phi(\mathbf{\xi},s,a)| |\xi_1|^{-k_2-k_4}\times\\
&\times|(\FBcF \chi_1)^{(k_3)}(-a\xi_1)||\xi_2|^{k_4}\left|\phi_2^{(k_4)}\left(\frac{\xi_2/\xi_1-s}{|a|^{1/2}}\right)\right|
\frac{ \D s \D a }{(1+s^2)(1+|\xi|^2)^{N/2}}\D \xi_1\D \xi_2,
\end{align*}
where $k_1,\, k_2,\, k_3,\, k_4\in\FBN$ are less than $k$.
\par
Since $\FBcS_0(\FBR)$ is closed under multiplications by a polynomial, by Lemma \ref{FBLm:mainlm} it follows that
for any given $k_3,\, m\in\FBN$ there exists $g\in  \mathcal{S}_{0} (\mathbb{R})$ such that
\begin{equation*}
(\FBcF \chi_1)^{(k_3)}(-a\xi_1)=-a^m \xi_1^m \FBcF g (-a\xi_1), \;\;\; \xi_1 \in \mathbb{R},\, a\in\FBR^{\times},
\end{equation*}
and we can continue the chain of inequalities with terms of the form
\begin{align*}
&\int_{\FBR^2}\int_{\FBR^{\times}}\int_{\FBR} |a|^{-\frac{13}{4}+k_3-\frac{k_4}{2}+m} (1+|\xi|^2)^{N/2}(1+s^2)|\partial_{\xi_1}^{k_1}\FBcF \Phi(\mathbf{\xi},s,a)| |\xi_1|^{-k_2-k_4+m}\\
&|\FBcF g(-a\xi_1)||\xi_2|^{k_4}\left|\phi_2^{(k_4)}\left(\frac{\xi_2/\xi_1-s}{|a|^{1/2}}\right)\right|
\frac{ \D s \D a }{(1+s^2)(1+|\xi|^2)^{N/2}}\D \xi_1\D \xi_2.
\end{align*}
Finally, choosing $N\in\FBN$, $N>2$, $m\geq k_2+k_4$ and splitting the integral over $\FBR^{\times}$ into integrals over
$|a|<\epsilon$ and $|a|>\epsilon$, with $\epsilon>0$, we obtain that
\begin{align*}
&\int_{\FBR^2}\int_{\FBR^{\times}}\int_{\FBR} |a|^{-\frac{13}{4}+k_3-\frac{k_4}{2}+m} (1+|\xi|^2)^{N/2}(1+s^2)|\partial_{\xi_1}^{k_1}\FBcF \Phi(\mathbf{\xi},s,a)| |\xi_1|^{-k_2-k_4+m}\\
&|\FBcF g(-a\xi_1)||\xi_2|^{k_4}\left|\phi_2^{(k_4)}\left(\frac{\xi_2/\xi_1-s}{|a|^{1/2}}\right)\right|
\frac{ \D s \D a }{(1+s^2)(1+|\xi|^2)^{N/2}}\D \xi_1\D \xi_2\\
&\lesssim [\rho_{N+m-k_2-k_4,N+k_4,2,|\frac{13}{4}-k_3+\frac{k_4}{2}-m|} ^{k_1,0,0,0} (\FBcF\Phi)
+\rho_{N+m-k_2-k_4,N+k_4,2,|-k_3+\frac{k_4}{2}-m|} ^{k_1,0,0,0} (\FBcF\Phi)],
\end{align*}
which is dominated by a single seminorm. We can treat $|{x_2}^{k}(S^t_{\psi}\Phi)(x_1,x_2)|$, $k\in  \mathbb{Z}_+$, in the same manner
and we conclude that the shearlet synthesis $\mathcal S_\psi^t $ is  a continuous map from
$  \mathcal{S}(\mathbb {S})$ into $ \mathcal{S}(\mathbb{R}^2) $.
Finally, it remains to prove that $S^t_{\psi}\Phi\in\FBcS_0(\FBR^2)$. The idea is to prove the equivalent condition
\begin{equation*}
\lim_{\xi\to0}\frac{\FBcF S^t_{\psi}\Phi(\xi)}{|\xi|^k}=0,
\end{equation*}
for every $k\in\FBN$, see {\cite[Lemma 6.0.4]{FBhol1995}}. We refer to \cite{FBbartoluccithesis} for the details.
\end{proof}
\section{The shearlet transform on $\mathcal{S}'_0(\FBR^2)$}
\label{FBsec:5}
In this last section, we extend the definition of the shearlet transform to the space of Lizorkin distributions and we show that our definition extends the ones introduced so far. In particular, we prove its consistency with the classical definition
for test functions.
\par
We recall that we consider admissible vectors $\psi$ of the form \eqref{FBeqn:shearletfactorization2} with $\chi_1$ defined by \eqref{FBeqn:phi} in $\mathcal{S}_0(\FBR)$ and $\phi_2=\mathcal{F}\psi_2\in \FBcS(\FBR)$.
\begin{definition}[\cite{FBbartoluccithesis}]
We define the shearlet transform of $f\in \FBcS'_0(\FBR^2)$ with respect to $\psi$ as follows
\begin{equation*}
( \FBcS_{\psi}f,\Phi ) = ( f,\FBcS^t_{\overline{\psi}}\Phi ),\quad \Phi\in \FBcS(\mathbb{S}).
\end{equation*}
\label{FBdefn:shearlettransformdistributions}
\end{definition}
The consistency of Definition~\ref{FBdefn:shearlettransformdistributions} is guaranteed by Theorem~\ref{FBthm:continuitysynthesisoperator}.
Furthermore, it follows straightforwardly that the shearlet transform of $f\in \FBcS'_0(\FBR^2)$ is a well defined distribution in $ \FBcS'(\mathbb{S})$.
\begin{proposition}[\cite{FBbartoluccithesis}]
The shearlet transform $\FBcS_{\psi}$ given by Definition \ref{FBdefn:shearlettransformdistributions}
is a continuous and  linear map from $\FBcS'_0(\FBR^2) $ into $ \FBcS'(\mathbb{S})$.
\end{proposition}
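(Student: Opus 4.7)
The plan is to obtain the proposition as a formal consequence of Theorem~\ref{FBthm:continuitysynthesisoperator}. By Definition~\ref{FBdefn:shearlettransformdistributions}, $\FBcS_{\psi}$ is precisely the transpose of the shearlet synthesis operator $\FBcS^t_{\overline{\psi}}$, and the transpose of a continuous linear map between locally convex spaces is automatically continuous with respect to the strong dual topologies. Hence the proof reduces to (i)~checking that Theorem~\ref{FBthm:continuitysynthesisoperator} can indeed be invoked with $\overline{\psi}$ in place of $\psi$, and (ii)~spelling out the transpose/polar argument to obtain continuity from $\FBcS'_0(\FBR^2)$ into $\FBcS'(\mathbb{S})$.

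For (i) I would observe that if $\psi$ obeys~\eqref{FBeqn:shearletfactorization2} with $\chi_1\in\FBcS_0(\FBR)$ and $\phi_2\in\FBcS(\FBR)$, then $\overline{\psi}$ admits an analogous factorization whose factor functions are obtained by complex conjugation together with the reflection $\xi\mapsto-\xi$; since both operations preserve $\FBcS_0(\FBR)$ and $\FBcS(\FBR)$, Theorem~\ref{FBthm:continuitysynthesisoperator} yields that $\FBcS^t_{\overline{\psi}}\colon\FBcS(\mathbb{S})\to\FBcS_0(\FBR^2)$ is continuous. Well-definedness and linearity of $\FBcS_{\psi}f$ for $f\in\FBcS'_0(\FBR^2)$ are then immediate: the map $\Phi\mapsto(f,\FBcS^t_{\overline{\psi}}\Phi)$ is the composition of a continuous linear operator with a continuous linear functional, hence defines an element of $\FBcS'(\mathbb{S})$, and the dependence on $f$ is linear because $f$ acts linearly on its test-function argument.

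For (ii) I would use that a neighborhood basis of $0$ in $\FBcS'(\mathbb{S})$ for the strong dual topology consists of the polars $B^{\circ}$ of bounded subsets $B\subset\FBcS(\mathbb{S})$. Given such a $B$, continuity of $\FBcS^t_{\overline{\psi}}$ makes $\FBcS^t_{\overline{\psi}}(B)$ bounded in $\FBcS_0(\FBR^2)$, so that its polar $U=(\FBcS^t_{\overline{\psi}}(B))^{\circ}$ is a neighborhood of $0$ in $\FBcS'_0(\FBR^2)$; by Definition~\ref{FBdefn:shearlettransformdistributions}, every $f\in U$ satisfies $\sup_{\Phi\in B}|(\FBcS_{\psi}f,\Phi)|=\sup_{\Phi\in B}|(f,\FBcS^t_{\overline{\psi}}\Phi)|\leq 1$, i.e., $\FBcS_{\psi}(U)\subset B^{\circ}$. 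This is continuity at $0$, and by linearity continuity globally. The main obstacle is genuinely minimal here: all substantive analytic estimates have already been carried out in the proof of Theorem~\ref{FBthm:continuitysynthesisoperator}, and what remains is a purely topological transposition plus the small verification of stability of the factorization hypotheses on $\psi$ under complex conjugation.
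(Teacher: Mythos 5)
Your argument is correct and is exactly the route the paper takes: the paper states (without writing out the details, which are delegated to the cited thesis) that the proposition follows straightforwardly from Theorem~\ref{FBthm:continuitysynthesisoperator} by transposition, and your polar/bounded-set argument together with the observation that the factorization hypotheses on $\psi$ are stable under complex conjugation is the standard way to fill that in. Nothing is missing.
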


\par

The next theorem shows that Definition~\ref{FBdefn:shearlettransformdistributions} is in fact
consistent with the definition for test functions (Definition \ref{FBdefsheartransf})
and  that it generalizes the extension considered in \cite{FBkula09,FBgr11} where the shearlet transform
of a tempered distribution $f$ with respect to an admissible vector $\psi\in\FBcS(\FBR^2)$ is given by the function
\begin{equation*}
\FBcS_{\psi}f(b,s,a)={_{\FBcS'(\FBR^2)}}( f, S_{b,s,a}\psi)_{\FBcS(\FBR^2)},
\end{equation*}
for every $(b,s,a)\in\mathbb{S}$.
Following the coorbit space approach, given a suitable test function space usually denoted by $\mathcal{H}_{1,w}$, where $w$ is a weight function,
and its anti-dual $\mathcal{H}_{1,w}^{\sim}$, the extended shearlet transform of $f\in \mathcal{H}_{1,w}^{\sim}$ with respect to $\psi\in \mathcal{H}_{1,w}$ is defined by
\[
\FBcS_{\psi}f(b,s,a)={_{\mathcal{H}_{1,w}^{\sim}}}(f, S_{b,s,a}\psi)_{\mathcal{H}_{1,w}},
\]
for every $(b,s,a)\in\mathbb{S}$, \cite{dahlikeetal}. Theorem~\ref{FBteo:desingularization} shows the equivalence of our duality approach with the coorbit space one.
Precisely, Theorem~\ref{FBteo:desingularization} states that the the shearlet transform of any Lizorkin distribution is given
by the function defined as
\[(b,s,a)\mapsto{_{\FBcS'_0(\FBR^2)}}( f, S_{b,s,a}\psi)_{\FBcS_0(\FBR^2)},\]
for every $(b,s,a)\in\mathbb{S}$.

\begin{theorem}[\cite{FBbartoluccithesis}]
Let $f\in \FBcS_0'(\FBR^2)$. The shearlet transform of $f$ is given by the function
\begin{equation*}
(b,s,a)\mapsto{_{\FBcS'_0(\FBR^2)}}( f, S_{b,s,a}\psi)_{\FBcS_0(\FBR^2)},
\end{equation*}
that is,
\begin{equation*}
(\mathcal{S}_{\psi}f,\Phi)= \int_{\mathbb{S}} ( f, S_{b,s,a}\psi)\Phi(b,s,a) {\rm d}\mu(b,s,a), \;\;\;
\Phi\in\FBcS(\mathbb{S}).
\end{equation*}
\label{FBteo:desingularization}
\label{thm:distributioncoorbit}
\end{theorem}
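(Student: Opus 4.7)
The plan is to prove that $F(b,s,a) := (f, S_{b,s,a}\psi)$ is a well-defined function on $\mathbb{S}$ of at most polynomial growth, hence defines an element of $\FBcS'(\mathbb{S})$ through the identification \eqref{FBdualitysynthesisspace}, and to identify this element with $\FBcS_\psi f$ as given by Definition~\ref{FBdefn:shearlettransformdistributions}. The displayed integral formula is then a direct consequence of \eqref{FBdualitysynthesisspace}.

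The first step is to check that $S_{b,s,a}\psi \in \FBcS_0(\FBR^2)$ for every $(b,s,a) \in \mathbb{S}$. The factorization \eqref{FBeqn:shearletfactorization2} together with $\chi_1 \in \FBcS_0(\FBR)$ places $\FBcF\psi$ in $\hat{\FBcS}_0(\FBR^2)$, so by Lemma~\ref{FBLm:stability} the Schwartz extension of $\psi$ belongs to $\FBcS_0(\FBR^2)$. Since $S_{b,s,a}$ is a translation composed with an invertible linear change of variables with matrix $S_s A_a$, the same lemma yields $S_{b,s,a}\psi \in \FBcS_0(\FBR^2)$. Next, because $f \in \FBcS_0'(\FBR^2)$, there exist $\nu \in \FBN$ and $C > 0$ with $|(f,\varphi)| \leq C \rho_\nu(\varphi)$ for every $\varphi \in \FBcS_0(\FBR^2)$. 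A direct chain-rule computation, followed by the substitution $y = A_a^{-1} S_s^{-1}(x - b)$, delivers a bound of the form
\begin{equation*}
\rho_\nu(S_{b,s,a}\psi) \lesssim \langle b \rangle^{\nu} \langle s \rangle^{N} \left(|a|^{M} + |a|^{-M}\right) \rho_{\nu + N}(\psi),
\end{equation*}
for suitable $N, M$ depending only on $\nu$, which gives the required polynomial growth of $F$. Measurability (indeed continuity) of $F$ follows from the strong continuity of $(b,s,a) \mapsto S_{b,s,a}\psi$ as an $\FBcS_0(\FBR^2)$-valued map.

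The main obstacle is to justify the interchange
\begin{equation*}
(f, \FBcS^t_{\overline\psi}\Phi) = \int_\mathbb{S} \Phi(b,s,a)\, (f, S_{b,s,a}\psi)\, d\mu(b,s,a),
\end{equation*}
for every $\Phi \in \FBcS(\mathbb{S})$. I would interpret $\FBcS^t_{\overline\psi}\Phi$ as a vector-valued integral in the nuclear Fr\'echet space $\FBcS_0(\FBR^2)$, and verify absolute convergence by showing that
\begin{equation*}
\int_\mathbb{S} |\Phi(b,s,a)|\, \rho_\nu(S_{b,s,a}\overline\psi)\, d\mu(b,s,a) < +\infty
\end{equation*}
for every seminorm $\rho_\nu$ of $\FBcS_0(\FBR^2)$. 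This follows by combining the polynomial bound above with the rapid decay of $\Phi$ in $b,s$ and in $|a|, 1/|a|$ encoded in the seminorms \eqref{FBnorma-hl2}: sufficiently large powers in $\Phi$ overwhelm the polynomial growth of $\rho_\nu(S_{b,s,a}\overline\psi)$ together with the factor $|a|^{-3}$ of the measure. Once absolute convergence in $\FBcS_0(\FBR^2)$ is established, continuity of $f$ allows one to pass $f$ inside the integral (equivalently, approximate by Riemann sums in $\FBcS_0(\FBR^2)$ and take the limit under the continuous functional $f$). Combined with Definition~\ref{FBdefn:shearlettransformdistributions} and \eqref{FBdualitysynthesisspace}, this yields the announced formula and shows that $\FBcS_\psi f$ is represented by the polynomially bounded function $F$.
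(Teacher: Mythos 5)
Your proposal is correct, but it follows a genuinely different route from the paper. The paper's proof invokes the identification of $\FBcS_0'(\FBR^2)$ with $\FBcS'(\FBR^2)$ modulo polynomials and the Schwartz structure theorem to write $f=g^{(\alpha)}+p$ with $g$ continuous and slowly growing; the desingularization then reduces to an integration by parts, differentiation under the integral sign in \eqref{FBsynthesisoperator}, and a classical Fubini interchange for the honest function $g$, with the polynomial part $p$ treated separately. You instead work abstractly: you read $\FBcS^t_{\overline\psi}\Phi$ as a weak (Pettis-type) integral with values in the Fr\'echet space $\FBcS_0(\FBR^2)$, prove absolute convergence with respect to every seminorm via the bound $\rho_\nu(S_{b,s,a}\overline\psi)\lesssim \langle b\rangle^{N}\langle s\rangle^{N}(|a|^{M}+|a|^{-M})$, and then commute the continuous functional $f$ with the integral. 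Each approach has its merits: the paper's argument uses only classical Fubini but leans on the structure theorem and must handle the polynomial representative separately; yours is representation-free, applies verbatim to any element of $\FBcS_0'(\FBR^2)$, and yields as byproducts both the membership $\FBcS^t_{\overline\psi}\Phi\in\FBcS_0(\FBR^2)$ (since $\FBcS_0(\FBR^2)$ is closed and each $S_{b,s,a}\overline\psi$ lies in it) and the polynomial growth of $(b,s,a)\mapsto (f,S_{b,s,a}\psi)$, which is exactly what is needed to read the right-hand side through \eqref{FBdualitysynthesisspace} --- a point the paper's proof leaves implicit. One small remark: Lemma~\ref{FBLm:stability} literally asserts closure of $\FBcS_0$ under translations and dilations, whereas $S_{b,s,a}$ involves the general invertible linear map $S_sA_a$; this is harmless, since $\hat{\FBcS}_0(\FBR^2)$ is characterized by infinite-order vanishing of $\FBcF\varphi$ at the origin, a property preserved under any invertible linear change of variables, but you should state this extension explicitly rather than cite the lemma as is.
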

\begin{proof}
Consider $f\in\FBcS'_0(\FBR^2)$. Since the space of Lizorkin distributions $\FBcS'_0(\FBR^2)$ is canonically isomorphic
to the quotient of $\FBcS'(\FBR^2)$ by the space of polynomials, by Schwartz' structural theorem \cite[Theor\'em VI]{FBLS1951},
we can write $f=g^{(\alpha)}+p$, where $g$ is a continuous slowly growing function, $\alpha\in\FBN^2$ and $p$ is a polynomial. Then, for any $\Phi\in\FBcS(\mathbb{S})$
\begin{align*}
( g^{(\alpha)},\mathcal{S}_{\overline{\psi}}^t\Phi)
&=(-1)^{|\alpha|}( g,\mathcal{S}_{\overline{\psi}}^t\Phi^{(\alpha)})=(-1)^{|\alpha|}\int_{\FBR^2} g(x) \mathcal{S}_{\overline{\psi}}^t\Phi^{(\alpha)}(x)\D x\\
&=(-1)^{|\alpha|} \int_{\FBR^2} g(x) \int_{\mathbb{S}} \Phi(b,s,a)\,\, ({S}_{b,s,a}\psi)^{(\alpha)}(x)
\,\,{\rm d}\mu(b,s,a) \D x\\
&=\int_{\mathbb{S}} \Phi(b,s,a)\,\,(-1)^{|\alpha|}\int_{\FBR^2} g(x)({S}_{b,s,a}\psi)^{(\alpha)}(x)\D x {\rm d}\mu(b,s,a)\\
&=\int_{\mathbb{S}} \Phi(b,s,a)\,\,(-1)^{|\alpha|}( g,({S}_{b,s,a}\psi)^{(\alpha)}) {\rm d}\mu(b,s,a)\\
&=\int_{\mathbb{S}} \Phi(b,s,a)\,\,( g^{(\alpha)},{S}_{b,s,a}\psi) {\rm d}\mu(b,s,a).
\end{align*}
Analogously, we have that
\begin{align*}
( p,\mathcal{S}_{\overline{\psi}}^t\Phi)= \int_{\mathbb{S}} \Phi(b,s,a)\,\,( p,{S}_{b,s,a}\psi) {\rm d}\mu(b,s,a).
\end{align*}
Therefore, we obtain
\begin{align*}
( f,\mathcal{S}_{\overline{\psi}}^t\Phi)=( g^{(\alpha)}+p,\mathcal{S}_{\overline{\psi}}^t\Phi)
&=\int_{\mathbb{S}} \Phi(b,s,a)\,\,( g^{(\alpha)}+p,{S}_{b,s,a}\psi) {\rm d}\mu(b,s,a)\\
&=\int_{\mathbb{S}} \Phi(b,s,a)\,\,( f,{S}_{b,s,a}\psi) {\rm d}\mu(b,s,a),
\end{align*}
which concludes the proof.
\end{proof}
%
%
\section*{Appendix}
\label{FB:appendix}
\addcontentsline{toc}{section}{Appendix}
As mentioned in the introduction, both the ridgelet transform and the shearlet transform are related to the wavelet and the Radon transforms and, as we now show, they are
related as well. The Appendix is devoted to prove this connection, which has in part inspired our work. We start briefly recalling the ridgelet transform and we refer to \cite{FBcado99} as a classical reference.
\par
We fix $\psi\in\FBcS(\FBR)$ and we define for every $(\theta, b, a)\in[-\pi,\pi)\times\FBR\times\FBR_+$ the function $R_{\theta, b, a}\psi$ as
\begin{equation*}
R_{\theta, b, a}\psi(x)=\frac{1}{a}\psi\left(\frac{x\cdot n(\theta)-b}{a}\right),\qquad x\in\FBR^2,
\end{equation*}
where $n(\theta)=(\cos{\theta},\sin{\theta})$. Then, the ridgelet transform of $f\in L^1(\FBR^2)$ with respect to $\psi$ is given by
\begin{equation*}
\FBcR_{\psi}f(\theta,b,a)=\int_{\FBR^2} f(x)\overline{R_{\theta, b, a}\psi(x)}\, \D x,
\;\;\; (\theta,b,a)\in[-\pi,\pi)\times\FBR\times\FBR_+.
\end{equation*}
The ridgelet transform is related to the wavelet transform and the polar Radon transform by the following formula
\begin{equation}\label{FBeqn:rideletwaveletradon}
\FBcR_{\psi}f(\theta,b,a)=\mathcal{W}_{\psi}(\mathcal{R}^{\rm pol}f(\theta, \cdot))(b,a),
\end{equation}
for every $(\theta, b, a)\in[-\pi,\pi)\times\FBR\times\FBR_+$ and where the wavelet transform is one-dimensional and acts on the variable $q$. By equations~\eqref{FBeqn:rideletwaveletradon} and \eqref{FBeqn:polaff}, we obtain a relation formula between the ridgelet and the shearlet transform. We consider an admissible vectors $\psi$ of the form \eqref{FBeqn:shearletfactorization2} satisfying conditions \eqref{FBeq:4} and with $\chi_1$ defined by \eqref{FBeqn:phi} belonging to $\mathcal{S}(\FBR)$.
\begin{proposition}\label{FBprop:rideletwaveletradon}
For any $f\in L^1(\FBR^2)\cap L^2(\FBR^2)$ and $((b_1,b_2),s,a)\in \FBR^2\times\FBR\times\FBR_+$,
\begin{align*}
&\mathcal{S}_{\psi}f((b_1,b_2),s,a)\\
&=|a|^{-\frac{3}{4}}\int_{\mathbb{R}}\frac{1}{\sqrt[4]{1+v^2}}\FBcR_{\chi_1}f\left(\arctan{v},\frac{b_1+vb_2}{\sqrt{1+v^2}},\frac{a}{\sqrt{1+v^2}}\right)\overline{\phi_2\left(\frac{v-s}{|a|^{1/2}}\right)}\ {\rm d}v.
\end{align*}
\end{proposition}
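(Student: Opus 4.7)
The plan is to chain together the three identities already collected in the paper: Theorem~\ref{FBteo:teocentrale} (which expresses $\mathcal{S}_\psi f$ through a one-dimensional wavelet transform of the affine Radon transform), equation~\eqref{FBeqn:polaff} (which relates $\mathcal{R}^{\rm aff}$ to $\mathcal{R}^{\rm pol}$), and equation~\eqref{FBeqn:rideletwaveletradon} (which identifies $\FBcR_{\chi_1}$ with the wavelet transform of $\mathcal{R}^{\rm pol} f$). The whole argument reduces to a careful bookkeeping of how the wavelet transform transforms under the affine reparametrization $t \mapsto t/\sqrt{1+v^2}$.

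Starting from \eqref{FBeqn:achamain}, for fixed $v \in \FBR$ I would set $\lambda = \sqrt{1+v^2}$ and $g_v(t) = \mathcal{R}^{\rm pol} f(\arctan v, t)$, so that \eqref{FBeqn:polaff} reads $\mathcal{R}^{\rm aff} f(v,t) = \lambda^{-1} g_v(t/\lambda)$. The core computation is therefore
\begin{align*}
\mathcal{W}_{\chi_1}\!\bigl(\mathcal{R}^{\rm aff} f(v,\cdot)\bigr)(b_1+vb_2,a)
&= |a|^{-1/2}\!\int_{\FBR} \lambda^{-1} g_v(t/\lambda)\,\overline{\chi_1\!\left(\tfrac{t-(b_1+vb_2)}{a}\right)} \D t,
\end{align*}
which, after the substitution $u = t/\lambda$, becomes
\begin{align*}
|a|^{-1/2}\!\int_{\FBR} g_v(u)\,\overline{\chi_1\!\left(\tfrac{u - (b_1+vb_2)/\lambda}{a/\lambda}\right)} \D u
= \lambda^{-1/2}\,\mathcal{W}_{\chi_1} g_v\!\left(\tfrac{b_1+vb_2}{\lambda},\tfrac{a}{\lambda}\right),
\end{align*}
where I used that the absent dilation prefactor in the wavelet transform contributes $|a/\lambda|^{1/2}$ to rebuild $\mathcal{W}_{\chi_1}$ at scale $a/\lambda>0$, leaving one net factor of $\lambda^{-1/2}$.

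Applying \eqref{FBeqn:rideletwaveletradon} to identify $\mathcal{W}_{\chi_1} g_v = \FBcR_{\chi_1} f(\arctan v, \cdot, \cdot)$ and substituting back into \eqref{FBeqn:achamain} yields exactly the claimed formula, since $\lambda^{-1/2} = 1/\sqrt[4]{1+v^2}$. The step using \eqref{FBeqn:rideletwaveletradon} is legitimate because, under the hypotheses of the proposition, $\chi_1 \in \mathcal{S}(\FBR)$ and $g_v \in L^2(\FBR)$ for a.e.\ $v$, by the standard $L^2$-theory of the polar Radon transform on $L^1 \cap L^2$.

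The only delicate point I anticipate is the normalization of the wavelet transform under the dilation $u = t/\lambda$: the $|a|^{-1/2}$ in front of $\mathcal{W}_{\chi_1}$ must be reassembled into $|a/\lambda|^{-1/2}$ to recover a genuine wavelet coefficient at scale $a/\lambda$, and this is the origin of the fourth-root factor $(1+v^2)^{-1/4}$ in the statement. Keeping $a>0$ (as assumed in the proposition, so that $a/\lambda > 0$ lies in $\FBR_+$ where the ridgelet transform is defined) avoids any sign issues with $|a|^{1/2}$. Once this factor is tracked correctly, the rest is a direct substitution.
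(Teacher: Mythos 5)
Your proposal is correct and follows essentially the same route as the paper: both chain Theorem~\ref{FBteo:teocentrale}, the polar--affine relation \eqref{FBeqn:polaff}, and the ridgelet identity \eqref{FBeqn:rideletwaveletradon}, the only cosmetic difference being that you track the factor $(1+v^2)^{-1/4}$ by a direct change of variables in the wavelet integral whereas the paper routes it through the dilation operator $D_{\sqrt{1+v^2}}$ and the covariance of $\mathcal{W}_{\chi_1}$. The normalization bookkeeping is handled correctly, so no changes are needed.
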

\begin{proof}
By Theorem~\ref{FBteo:teocentrale}, for any $f\in L^1(\FBR^2)\cap L^2(\FBR^2)$ and $((b_1,b_2),s,a)\in \FBR^2\times\FBR\times\FBR^{\times}$, the shearlet transform has the following expression
\begin{equation}\label{FBeqn:firstapositive}
\mathcal{S}_{\psi}f((b_1,b_2),s,a)=|a|^{-\frac{3}{4}}\int_{\mathbb{R}}\mathcal{W}_{\chi_1}(\FBcR^{\rm aff} f(v,\cdot))(b_1+vb_2,a)\overline{\phi_2\left(\frac{v-s}{|a|^{1/2}}\right)}\ {\rm d}v.
\end{equation}
By equation \eqref{FBeqn:polaff}, for every $v\in\FBR$, $(b_1,b_2)\in\FBR^2$ and $a\in\FBR_+$, we compute
\begin{align}\label{FBeqn:rideletwavelet}
\nonumber&\mathcal{W}_{\chi_1}(\FBcR^{\rm aff} f(v,\cdot))(b_1+vb_2,a)\\
\nonumber&=\frac{1}{\sqrt{1+v^2}}\mathcal{W}_{\chi_1}(\FBcR^{\rm pol} f(\arctan{v},\frac{\cdot}{\sqrt{1+v^2}}))(b_1+vb_2,a)\\
\nonumber&=\frac{1}{\sqrt[4]{1+v^2}}\mathcal{W}_{\chi_1}(D_{\sqrt{1+v^2}}\FBcR^{\rm pol} f(\arctan{v},\cdot))(b_1+vb_2,a)\\
\nonumber&=\frac{1}{\sqrt[4]{1+v^2}}\mathcal{W}_{\chi_1}(\FBcR^{\rm pol} f(\arctan{v},\cdot))\left(\frac{b_1+vb_2}{\sqrt{1+v^2}},\frac{a}{\sqrt{1+v^2}}\right)\\
&=\frac{1}{\sqrt[4]{1+v^2}}\mathcal{R}_{\chi_1}f\left(\arctan{v},\frac{b_1+vb_2}{\sqrt{1+v^2}},\frac{a}{\sqrt{1+v^2}}\right).
\end{align}
Replacing formula~\eqref{FBeqn:rideletwavelet} in \eqref{FBeqn:firstapositive} we obtain the desired relation.
\end{proof}
\section*{Acknowledgement}
F. Bartolucci is part of the Computational Harmonic Analysis \& Machine Learning unit of the Machine Learning Genoa Center (MalGa). S. Pilipovi\'{c} and N. Teofanov were supported by the
Ministry of Education, Science and Technological Development of the
Republic of Serbia through Project 174024.


\begin{thebibliography}{99.}%

\bibitem{FBbartoluccithesis} {\sc F. Bartolucci.} {R}adon transforms: Unitarization, Inversion and Wavefront sets [Ph.D. thesis]. \url{http://hdl.handle.net/11567/997903}.

\bibitem{FBbardemadeviodo} F. Bartolucci, F. De Mari, E. De Vito, F. Odone.
The {R}adon transform intertwines wavelets and shearlets.
\textit{Applied and Computational Harmonic Analysis} \textbf{47} (2019), no. 3,
822--847.


\bibitem{FBcado99} {\sc E.~J. Cand{\`e}s, D.~L. Donoho.} Ridgelets: A key to higher-dimensional intermittency?
\textit{Philos. Trans. R. Soc. A} \textbf{357} (1999), no. 1760, 2495--2509.


\bibitem{FBdahlke2008}
{\sc S.~Dahlke, G.~Kutyniok, P.~Maass, C.~Sagiv, H.~Stark, and G.~Teschke},
 The uncertainty principle associated with the continuous shearlet
  transform, \textit{International Journal of Wavelets, Multiresolution and
  Information Processing} \textbf{6} (2008), no. 2, 157--181.

\bibitem{dahlikeetal}
{\sc S.~Dahlke, G.~Kutyniok, G. Steidl, and G.~Teschke},
Shearlet coorbit spaces and associated Banach spaces,
\textit{Applied and Computational Harmonic Analysis} \textbf{27} (2009), no. 2,
195--214.

\bibitem{feichtingergrochenig89}
{\sc H.G.~Feichtinger, K.~Gr\"ochenig},
Banach spaces related to integrable group representations and their
atomic decompositions, I,
\textit{Journal of Functional Analysis},
\textbf{86} (1989), no. 2, 307--340.

\bibitem{feichtgroche89}
{\sc H.G. Feichtinger, K. Gr\"ochenig},
Banach Spaces Related to
Integrable Group Representations and Their Atomic Decompositions. Part II,
\textit{Monatshefte f\"ur Mathematik},
\textbf{108} (1989), no. 2-3, 129--148.

\bibitem{FBfinkkahler19} {\sc T. Fink, U. Kahler.}
A Space-Based Method for the Generation of a Schwartz Function with Infinitely Many Vanishing Moments of Higher Order with Applications in Image Processing,
\textit{Complex Analysis and Operator Theory} \textbf{13} (2019), no. 3, 985--1010.

\bibitem{FBfolland16}
{\sc G.~B. Folland}, {\em A course in abstract harmonic analysis}, Textbooks in
  Mathematics, CRC Press, Boca Raton, FL, 2nd~ed., 2016.

\bibitem{FBgr11}
{\sc P.~Grohs}, Continuous shearlet frames and resolution of the wavefront
  set, \textit{Monatshefte f{\"u}r Mathematik} \textbf{164} (2011), no. 4, 393--426.


\bibitem{FBhelgason99}
{\sc S.~Helgason}, {\em The {R}adon transform}, vol.~5 of Progress in
  Mathematics, Birkh\"auser Boston, Inc., Boston, MA, 2nd~ed., 1999.

\bibitem{FBhertle83} {\sc A. Hertle.} Continuity of the Radon transform and its inverse on Euclidean spaces. \textit{Math Z.} \textbf{184} (1983), 165--192.

\bibitem{FBhol1995} {\sc M. Holschneider.} \textit{Wavelets. An analysis tool.} The Clarendon
Press, Oxford University Press, New York (1995).

\bibitem{FBhormander83} {\sc L. H\"{o}rmander.} \textit{The analysis of linear partial differential operators. I.} Grundlehren der Mathematischen Wissenschaften 256, Springer-Verlag, Berlin (1983).

\bibitem{FBjaffard89} {\sc S. Jaffard.} Exposants de {H}\"{o}lder en des points donn\'{e}s et coefficients d'ondelettes. \textit{Comptes rendus de l'Acad\'emie des Sciences Series I  Mathematics} \textbf{308} (1989), no. 4, 79--81.

\bibitem{FBkula09} {\sc G. Kutyniok, D. Labate.} Resolution of the wavefront set using continuous shearlets.
\textit{Trans. Amer. Math. Soc.} \textbf{361} (2009), no. 5, 2719--2754.

\bibitem{FBkula12} {\sc G. Kutyniok, D. Labate.} \textit{Shearlets.} Appl. Numer. Harmon. Anal.
Birkh\"auser/Springer, New York (2012).

\bibitem{FBmallat09} {\sc S. Mallat.} \textit{A wavelet tour of signal processing, The sparse way.} Elsevier, Academic Press, Amsterdam (2009).

\bibitem{FBkpsv2014} {\sc S. Kostadinova, S. Pilipovi\'{c}, K. Saneva, J. Vindas.}
The ridgelet transform of distributions. \textit{Integral Transforms Spec. Funct.} \textbf{25} (2014), no. 5, 344--358.

\bibitem{FBpilvul} {\sc  S. Pilipovi\'{c}, M. Vuleti\'c.}
 Characterization of wave front sets by wavelet transforms. \textit{Tohoku Math. J.} \textbf{58} (2006), no. 3, 369--391.

\bibitem{FBPRTV}  {\sc  S. Pilipovi\'c, D. Raki\'c, N. Teofanov, J. Vindas.}
The wavelet transforms in Gelfand-Shilov spaces,
\textit{Collectanea Mathematica} \textbf{67} (2016), no. 3, 443--460.

\bibitem{FBLS1951} {\sc L. Schwartz.} \textit{Th\'eorie des distributions. Tome II.}  Actualit\'es Sci. Ind., no. 1122 Publ.
Inst. Math. Univ. Strasbourg 10. Hermann \& Cie., Paris (1951).

\bibitem{FBsomu17}
{\sc S. Sonoda, N. Murata.} Neural network with unbounded activation functions is universal approximator.
\textit{Applied and Computational Harmonic Analysis} \textbf{43} (2017), no. 2,
233--268.

\bibitem{FBTreves1967} {\sc F. Tr\`eves}. \textit{Topological vector spaces, distributions and kernels.} Academic Press, New York-London (1967).


\end{thebibliography}
\end{document}